\newcommand{\argmin}{\operatorname{argmin}}
\newcommand{\Rr}{{\mathbb{R}}}
\newcommand{\rar}{\rightarrow}
\newtheorem{theorem}{Theorem}
\newtheorem{lemma}{Lemma}
\newtheorem{proposition}{Proposition}
\newtheorem{definition}{Definition}
\begin{document}

\title{Mean field limit of a continuous time finite state game}
\author{Diogo
  A. Gomes
\footnote{Departamento de Matem\'atica and CAMGSD, IST, Lisboa, Portugal. e-mail: dgomes@math.ist.utl.pt}
, Joana Mohr \footnote{Instituto de Matem\'atica, UFRGS, 91509-900 Porto Alegre, Brasil. e-mail: rafars@mat.ufrgs.br}
and Rafael Rig\~ao Souza \footnote{Instituto de Matem\'atica, UFRGS, 91509-900 Porto Alegre, Brasil. e-mail: joana.mohr@ufrgs.br}
}

\date{\today} 

\maketitle

\begin{abstract}
Mean field games is a recent area of study introduced by Lions and Lasry in a series of seminal papers in 2006. Mean field games
model situations of competition between large number of rational agents that play non-cooperative
dynamic games under certain symmetry assumptions. A key step is to develop a mean field model, in
a similar way to what is done in statistical physics in order to construct a mathematically tractable model.
A main question that arises
in the study of such mean field problems is the rigorous justification of the mean field models
by a limiting procedure.

In this paper we consider
the mean field limit of two-state Markov decision problem as the number of players $N\to \infty$. First we establish the existence
and uniqueness of a symmetric partial information Markov perfect equilibrium. Then we derive a mean field model and characterize
its main properties.
This mean field limit is a system of coupled ordinary differential equations with initial-terminal data.
Our main result is the convergence as $N\to \infty$ of the $N$ player game to the mean field model and an estimate of the rate of convergence.
\end{abstract}

\thanks{D.G. was partially supported by CAMGSD/IST through FCT Program POCTI -
FEDER and by grants PTDC/MAT/114397/2009,
UTAustin/MAT/0057/2008, PTDC/EEA-ACR/67020/2006, PTDC/MAT/69635/2006,
and PTDC/MAT/72840/2006, and by the bilateral agreement Brazil-Portugal (CAPES-FCT) 248/09}

\thanks{R.R.S was partially supported by the bilateral agreement Brazil-Portugal (CAPES-FCT) 248/09, and
CAPES, PROCAD, Projeto Universal CNPq 471473/2007-3.}

\thanks{J.M was partially supported by the bilateral agreement Brazil-Portugal (CAPES-FCT) 248/09.}

\section{Introduction}

Mean field games is a recent area of research started by Pierre Louis Lions and Jean Michel Lasry
\cite{ll1, ll2, ll3, ll4} which attempts to understand the limiting behavior of systems involving very large
numbers of rational agents which play dynamic games under partial information and symmetry assumptions.
Inspired by ideas in statistical physics, Lions and Lasry introduced
a class of models in which the individual player contribution is encoded in a mean field
that contains only statistical properties about the ensemble.
A key question is how to
derive such effective or mean field equations that drive the system as well as to show convergence
as the number of agents increases to infinity. The literature on mean field games and its applications
is growing fast, for a recent survey see \cite{llg2} and reference therein. Applications of mean field games
arise in the study of growth theory in economics \cite{llg1} or environmental policy \cite{lst}, for instance, and it is likely that in
the future they will play an important r\^ole in economics and population models. There
is also a growing interest in numerical methods for
these problems \cite{lst}, \cite{DY}. The authors \cite{GMS} have also considered the discrete time, finite state
problem.


In this paper we consider the mean field limit of games between a large number of players that
are allowed to switch between two states. We are particularly interested in understanding
the limit as the number of players increases to infinity.
We should stress the the fact that we are considering only two states plays no special r\^ole and
we could easily generalize our results to any finite number of states.

In his PhD thesis, \cite{GueantT}, O. Gu\'eant
considered a problem with two states, modeling the labor market.
In this work he considered a continuum of individuals and a  labor market consisting of 2
sectors. Each individual has to decide on which sector
he or she is going to work.
This model consists in a coupled
systems of ordinary differential equations of the type that will be derived in section \ref{mfmsec}.
Another possible application of our models concerns the adoption or change of
a technology or services. For instance, a single agent faced with different social networks
will have a incentive to move to the network with more potential contacts, however other effects play a role in this player decision, such as the level of services, trouble of changing network, loss of contacts and so on. Another similar example
concerns switching between cell phone companies.

We start in Section \ref{nplayer} to model the $N+1$ player problem as a Markov decision process. We assume that $N$ of
the players have a fixed Markov switching strategy $\beta$ and then look at a reference player which looks to minimize a
certain performance criterion by choosing a suitable switching strategy $\alpha(\beta)$. This is a well know
Markov decision problem. The key novelty in this section consists in showing the existence of a Nash
equilibrium such that $\alpha(\beta)=\beta$ and its characterization through a non-linear ordinary
differential equation. In fact, this is a continuous time, partial information, symmetric version of the Markov perfect equilibrium notion
that has been studied (mostly in discrete time or stationary setting) in \cite{mpe1, mpe3, mpe4, mpe12}, and references therein. In
\cite{mpe5, mpe6} symmetric Markov perfect equilibrium are also considered, and in the last paper the case with an infinite number of players
is studied. In \cite{mpe11} the passage from discrete time to continuous time is considered for $N$ players in a war of attrition problem.

In Section \ref{mfmsec} we derive a mean field model for the optimal switching policy of a reference player
given the fraction $\theta(t)$ of players in one of the states. This model turns out to be
a coupled system of ordinary differential equations, where one equation governs the evolution of
$\theta$, and is subjected to initial conditions, whereas the other equation models
the evolution of the value function and has terminal data. We call this problem the initial-terminal
value problem. Initial terminal value problems are in fact a general feature in many mean field game
problems, see for instance \cite{ll1, ll2, ll3}. Of course, existence and uniqueness of solutions
is not immediate from the general ODE theory but, adapting the methods of Lions and Lasry we were successful
in establishing both.

Our main result, theorem \ref{teoconv},  is discussed in Section \ref{convsec}
where we prove
the convergence as the number of players $N\to \infty$ to a mean field model.






\section{The $N+1$ player game}
\label{nplayer}

In this section we consider symmetric games between $N+1$ players under a symmetric partial information pattern.
We start by discussing the framework of this problem, namely controlled Markov Dynamics, \S\ref{cmdyn}, admissible
controls \S\ref{acdf}, and the individual player problem \S\ref{ippv}. Then in \S \ref{hypot} we discuss the main assumptions on running and terminal cost
that allow us to use Hamilton-Jacobi ODE methods, in \S \ref{thjode} to solve the $N+1$ player problem. Maximum principle type estimates are considered in \S
\ref{mp0} which are then applied to establishing the existence of Nash equilibrium solutions, \S\ref{eqsols}. This section ends with an example \S \ref{expl}.

\subsection{Controlled Markov Dynamics}
\label{cmdyn}

We consider a dynamic game between $N+1$ players
that are allowed to switch between two states denoted by $0$ and $1$.
We suppose that all players are identical and so the game is symmetric with respect to permutation of the players.
To describe the game we will use a reference player, which could be chosen as any one of the players.

If we fix any player as the reference player, we will suppose that he knows his own state at time $t$,  given by  $i(t)$, and also knows the number $n(t)$ of remaining players that are in state $0$. $i(t)$ and $n(t)$ are stochastic processes that we will describe in the following.
No further information is available to the reference player.
Because the game is symmetric, the identity of the reference player is not important, and all other players have access to the same kind of information, i.e., its own state and the fraction of other players in state $0$.

We suppose the process $(n(t),i(t))$ is a
continuous time Markov process: the reference player follows a controlled Markov process $i(t)$ with transition rates from state $i$ to the other state $1-i$ given by  $\beta=\beta(i,n,t)$.
 More precisely we have
$$
\mathbb{P}\Big(i(t+h)=1-i\|n(t)=n,i(t)=i\Big)= \beta(i,n,t).h+o(h)\,,
$$
where $\lim \frac{o(h)}h =0$ when $ h \rar 0$.
Because of the symmetry of the game, all other players follow their own Markov process controlled by the same transition rate function $\beta:\{0,1\}\times\{0,...,N\}\times[0,+\infty) \rar [0,+\infty)$.
Note that the rate function $\beta$ is a deterministic time-dependent function, which makes $(n(t),i(t))$ a
non-time homogeneous Markov process.
We will suppose that $\beta$ is bounded and continuous as a function of time. We will refer to any Markov control with rate function  which is bounded and continuous on time, as an {\it admissible control}.

 The transition rates of the process $n(t)$ are given by
\begin{align}\label{gamma}
\gamma^+_\beta (i,n,t)&= (N-n) \beta (1, n+1- i, t)\,,\\ \notag
\gamma^-_\beta(i,n,t)&= n \beta (0, n-i, t)\,,
\end{align}
where $\gamma^+_\beta$ stands for the transition rate from $n$ to $n+1$, and $\gamma^-_\beta$
is the  transition rate from $n$ to  $n-1$.
Note that $n+1-i$ is the total number of players in state $0$, as seen by a player (distinct from the reference player) in state $1$ whereas
$n-i$ is the number of players in state $0$ as seen by a player (distinct from the reference player) in state $0$.

More precisely, we have
\begin{align*}
\mathbb{P}\Big(n(t+h)=n+1\|n(t)&=n,i(t)=i\Big)= \gamma^+_\beta(i,n,t).h+o(h)\,,\\
\mathbb{P}\Big(n(t+h)=n-1\|n(t)&=n,i(t)=i\Big)= \gamma^-_\beta(i,n,t).h+o(h)\,,
\end{align*}
where $\lim \frac{o(h)}h =0$ when $ h \rar 0$.

We assume further that the state transitions of the different players
are independent, conditioned on $i$ and $n$.
Note that no information is available to any player concerning the state of any other individual player. All each player knows is its position and the number of other players in state $0$, which mean, the fraction of other players in each one of the states $0$ and $1$.

\subsection{A control problem}

Let now $T>0$, and let $c:\{0,1\}\times [0,1]\times \Rr_0^+\to \Rr$
and $\psi:\{0,1\}\times [0,1]\to \Rr$ be two (non-negative) functions. We will discuss the
precise hypothesis on $c$ and $\psi$ in section \ref{hypot}. We suppose $c\left(i,\frac n N,\beta\right)$ represents a running cost incurred by the reference player when he is in state $i$, $n$ of the remaining $N$ players are in state $0$ and this player has a transition rate $\beta$ from $i$ to $1-i$. We also suppose $\psi\left(i,\frac{n} N\right)$
represents a terminal cost incurred by the reference player at the terminal time $T$, if he ends up at time $T$ in state
$i$ and at that time $n$ of the other players are in state $0$.

If $A_t(i, n)$ denotes the event $i(t) = i$ and $n(t) = n$, the expected total cost of the reference player, giving the control $\beta$
and conditioned
on the event $A_t(i,n)$, will  be
\[V^{\beta}(i,n,t) =
\mathbb{E}^{\beta}_{A_t(i, n)} \left[ \int_t^T c\left(i(s),\frac{n(s)}N,\beta(s)\right)ds + \psi\left(i(T),
\frac{n(T)}N\right) \right]\;.
\]

We could be interested in finding an admissible control $\beta$ that minimizes, for each $(i,n,t)$, the function $V$ defined above. This however would require a cooperative behavior between players and it would be an usual stochastic optimal control problem. Instead, we are interested in finding an admissible control $\beta$ that is a symmetric Nash equilibria
for the game which we will soon describe. 

\subsection{The Dynkin formula}
\label{acdf}



Given two admissible controls $\beta$ and $\alpha$, we can define a non-time homogeneous Markov process
$(n(t), i(t))$ where the transition rates for $n$ are given by \eqref{gamma}
and the transition rate for $i$ is given by $\alpha$ as
$$ \mathbb{P}\Big(i(t+h)=1-i\|n(t)=n,i(t)=i\Big)= \alpha(i,n,t).h+o(h)\,,$$
where $\lim \frac{o(h)}h =0$ when $ h \rar 0$.
The idea here is that, while other players use the control $\beta$, the reference player can choose another control $\alpha$.

Furthermore, 
  we have that,
for any function $\varphi:\{0,1\} \times \{0,1,2,...,N\} \times [0,+\infty) \rar \Rr$, smooth in the last variable, and any $s>t$,
\begin{equation}\label{Dynkin}  \mathbb{E}^{\beta,\alpha}_{A_t(i, n)} \left[\varphi(i(s),n(s),s)-\varphi(i,n,t)\right]
= \mathbb{E}^{\beta,\alpha}_{A_t(i, n)} \left[ \int_t^s
\frac{d\varphi}{dt} (i,n,r) + A^{\beta,\alpha}\varphi(i,n,r) dr  \right]\,,
 \end{equation}
where $A_t(i, n)$ still denotes the event $i(t) = i$ and $n(t) = n$, and
\begin{align}
\label{generator} &A^{\beta,\alpha}\varphi(i,n,r) = \alpha(i,n,r)(\bar \varphi- \varphi)(i,n,r)+\\\notag &\quad +
 \gamma^+_\beta(i,n,r) (\varphi(i,n+1,r)-\varphi(i,n,r)) +
 \gamma^-_\beta(i,n,r) (\varphi(i,n-1,r)-\varphi(i,n,r)) \,,
 \end{align}
where $\gamma^+_\beta$ and $\gamma^-_\beta$ are defined by (\ref{gamma}),
and $\bar \varphi(i,n,t) =  \varphi(1-i,n,t)$.

We call $A^{\alpha, \beta}$ the generator of the 
process and \eqref{Dynkin} the Dynkin's formula in
analogy to the Dynkin's formula in stochastic calculus.

\medskip

\subsection{Individual player point of view - introducing the game}
\label{ippv}



Now we suppose the reference player decides unilaterally  to use a different control, trying to improve its value function.

We will suppose the other players continue to  follow the Markov Chain with transition rate
$\beta(i,n,t)$, bounded and  continuous on time.
Therefore $n(t)$, the number of such players that are in state $0$, is a process to which correspond transition rates $\gamma^+_\beta$ and $\gamma^-_\beta$ as in (\ref{gamma}).

The reference player  looks for an admissible control $\alpha$,  possibly different from $\beta$, 
that minimizes
\[
u(i,n,t,\beta,\alpha)= \mathbb{E}^{\beta,\alpha}_{A_t(i, n)} \left[ \int_t^T c\left(i(s),\frac{n(s)}N,\alpha(s)\right)ds + \psi\left(i(T),
\frac{n(T)}N\right) \right]\;.
\]
That is, reference player looks for the control $\alpha$ which is a solution to the minimization problem
$$
u(i,n,t;\beta)= \inf_{\alpha} u(i,n,t,\beta,\alpha),
$$
where the minimization is performed over the set of all admissible controls $\alpha$.
We will call the function $u(i,n,t;\beta)$ above
the value function for the reference player  associated to
the strategy $\beta$ of the remaining $N$ players.
The control $\alpha$ that attains the minimum above can be called the best response of any player to a control $\beta$. 



\medskip

\subsection{Assumptions on running and terminal cost}
\label{hypot}

We discuss now the hypothesis used in this paper concerning the running and terminal costs.
We suppose that both the running cost $c=c(i,\theta,\alpha):\{0,1\}\times [0,1]\times \Rr_0^+\to \Rr$
and the terminal cost $\psi=\psi(i,\theta):\{0,1\}\times [0,1]\to \Rr$ are non-negative functions, as mentioned in the previous section, and also that they are
Lipschitz continuous in $\theta$. Of course, our results
would still be valid without any change if $c$ and $\psi$ are simply bounded below, instead of being non-negative.

We assume that $c(i,\theta,\alpha)$ is uniformly convex on $\alpha \geq 0$ and superlinear. We assume further that $c$ is differentiable, and $c\,'(\theta, \alpha)   $ is Lipschitz in the variable $\theta$.

For $p\in \Rr$ we define
\[
h(p, \theta, i)=\min_{\alpha\geq 0} \left[c(i, \theta, \alpha)+\alpha p\right].
\]
Note that $h$ is an increasing concave function of $p$, Lipschitz in $\theta$, and, hence, bounded below by
\[
\min_{\theta\in[0,1], i\in\{0,1\}}h(0, \theta, i).
\]
Because of the uniform convexity the minimum is achieved at a single point, and the function
\[
\alpha^*(p, \theta, i)= \argmin_{\alpha\geq 0} \left[c(i, \theta, \alpha)+\alpha p\right].
\]
is well defined. Furthermore we have
\begin{proposition}
The function $\alpha^*$ is locally Lipschitz in $p$, uniformly in $\theta\in [0,1]$. Furthermore it is uniformly
Lipschitz in $\theta$.
\end{proposition}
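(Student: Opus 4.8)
The plan is to characterize $\alpha^*$ through its first order optimality condition and then exploit the uniform convexity of $c$ in $\alpha$ to turn a perturbation of the parameters $p$ and $\theta$ into a quantitative control on the displacement of the minimizer. Write $c_\alpha$ for the derivative of $c$ in its last argument (what the statement calls $c\,'$), and let $\lambda>0$ be the modulus of uniform convexity, so that $c_\alpha(i,\theta,\cdot)$ is strictly increasing with
\begin{equation*}
\bigl(c_\alpha(i,\theta,a)-c_\alpha(i,\theta,b)\bigr)(a-b)\ \geq\ \lambda\,(a-b)^2 .
\end{equation*}
Since the minimization runs over the half line $\alpha\geq 0$, the minimizer $\alpha^*=\alpha^*(p,\theta,i)$ is characterized by the variational inequality $(c_\alpha(i,\theta,\alpha^*)+p)(\beta-\alpha^*)\geq 0$ for all $\beta\geq 0$; equivalently $\alpha^*=0$ when $c_\alpha(i,\theta,0)+p\geq 0$ and $c_\alpha(i,\theta,\alpha^*)=-p$ otherwise. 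I would first record an a priori bound: by superlinearity of $c$ in $\alpha$, for $p$ in any bounded set the minimizer $\alpha^*$ stays in a fixed compact interval, uniformly in $(\theta,i)$. This is what makes the \emph{local in $p$} statement natural and lets me use the $\theta$-Lipschitz bound for $c_\alpha$ on a bounded range of $\alpha$ only.

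For the dependence on $p$, I would fix $\theta,i$ and take two values $p_1,p_2$ with minimizers $\alpha_1,\alpha_2$. Testing the variational inequality for $p_1$ with $\beta=\alpha_2$, the one for $p_2$ with $\beta=\alpha_1$, and adding, the linear terms combine to $(p_1-p_2)(\alpha_2-\alpha_1)$ and the cost terms to $(c_\alpha(i,\theta,\alpha_1)-c_\alpha(i,\theta,\alpha_2))(\alpha_2-\alpha_1)$; uniform convexity bounds the latter below by $\lambda(\alpha_1-\alpha_2)^2$, so that
\begin{equation*}
\lambda\,|\alpha_1-\alpha_2|^2\ \leq\ |p_1-p_2|\,|\alpha_1-\alpha_2|,
\end{equation*}
whence $|\alpha^*(p_1,\theta,i)-\alpha^*(p_2,\theta,i)|\leq \lambda^{-1}|p_1-p_2|$, with a constant independent of $\theta$ and $i$.

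For the dependence on $\theta$ I would run the same test with $p$ fixed and $\theta_1,\theta_2$, obtaining after adding the two inequalities that $(c_\alpha(i,\theta_1,\alpha_1)-c_\alpha(i,\theta_2,\alpha_2))(\alpha_2-\alpha_1)\geq 0$. I then split
\begin{equation*}
c_\alpha(i,\theta_1,\alpha_1)-c_\alpha(i,\theta_2,\alpha_2)=\bigl[c_\alpha(i,\theta_1,\alpha_1)-c_\alpha(i,\theta_1,\alpha_2)\bigr]+\bigl[c_\alpha(i,\theta_1,\alpha_2)-c_\alpha(i,\theta_2,\alpha_2)\bigr].
\end{equation*}
The first bracket is again controlled below by $\lambda|\alpha_1-\alpha_2|^2$ via uniform convexity, while the second is bounded by the Lipschitz constant $L$ of $c_\alpha=c\,'$ in $\theta$ (evaluated on the compact $\alpha$-range furnished by the a priori bound). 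This gives $\lambda|\alpha_1-\alpha_2|^2\leq L\,|\theta_1-\theta_2|\,|\alpha_1-\alpha_2|$ and therefore $|\alpha^*(p,\theta_1,i)-\alpha^*(p,\theta_2,i)|\leq (L/\lambda)\,|\theta_1-\theta_2|$, uniformly in $p$ on bounded sets and in $i$.

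The computation itself is routine monotone-operator bookkeeping; the genuine care lies in two places. First, the unilateral constraint $\alpha\geq 0$, which the variational-inequality formulation absorbs automatically, so that no separate interior-versus-boundary case analysis is needed. Second, making the constant in the $\theta$-estimate truly uniform: this rests on the superlinearity-driven a priori bound confining $\alpha^*$ to a compact set on which the $\theta$-Lipschitz constant of $c_\alpha$ can be taken uniform in $\alpha$. If one only has uniform convexity on bounded intervals rather than a single global modulus $\lambda$, the same a priori bound supplies the relevant $\lambda$, which is precisely why the $p$-regularity is stated locally.
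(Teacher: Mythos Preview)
Your proof is correct and follows essentially the same route as the paper's: both arguments combine the first-order optimality condition for the constrained minimizer (your variational inequality, the paper's inequality $(c'(\theta,\alpha^*(p))+p)(\alpha'-\alpha^*(p))\geq 0$) with the uniform convexity of $c$ in $\alpha$ to obtain $\lambda|\alpha_1-\alpha_2|^2\leq|p_1-p_2|\,|\alpha_1-\alpha_2|$ and the analogous $\theta$-estimate. The only cosmetic difference is that the paper writes uniform convexity as a quadratic lower bound on $c$ and chains two inequalities, whereas you phrase it as strong monotonicity of $c_\alpha$ and add the two variational inequalities symmetrically; your extra discussion of the superlinearity-driven a priori bound on $\alpha^*$ is not needed under the paper's standing assumption that $c'$ is globally Lipschitz in $\theta$, and in fact both proofs yield a global Lipschitz constant $1/\gamma$ in $p$, so the word ``locally'' in the statement is an understatement.
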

\begin{proof}

We will use the following inequalities, which are consequence of the uniform convexity of $c$: for all $\theta,\alpha\,', \alpha, p$ and $p\,'$, we have
\begin{equation}\label{uniconv1}c(\theta,\alpha')+\alpha' p\,'\geq c(\theta,\alpha)+\alpha p\,'+(c\,'(\theta,\alpha)+p\,')(\alpha'-\alpha)+\gamma |\alpha'-\alpha|^2,
\end{equation} and because $\alpha^*(p,\theta)$ is a minimizer,
 \begin{equation}\label{uniconv2}(c\,'(\theta,\alpha^*(p,\theta))+p)(\alpha'-\alpha^*(p))\geq 0\,.
 \end{equation}

 We will first prove that $\alpha^*$ is uniformly Lipschitz in $p$ : for that, we
 suppose  that $\theta$ is fixed. By the definition of $\alpha^*$ and  equation \eqref{uniconv1} we have
$$ c(\alpha^*(p))+\alpha^*(p) p\,'\geq c(\alpha^*(p\,'))+\alpha^*(p\,') p\,' \geq$$
$$\geq c(\alpha^*(p))+\alpha^*(p) p\,'+(c\,'(\alpha^*(p))+p\,')(\alpha^*(p\,')-\alpha^*(p))+\gamma |\alpha^*(p\,')-\alpha^*(p)|^2,
$$ hence

$$ 0\geq (c\,'(\alpha^*(p))+p)(\alpha^*(p\,')-\alpha^*(p))+(p\,'-p)(\alpha^*(p\,')-\alpha^*(p))+\gamma |\alpha^*(p\,')-\alpha^*(p)|^2.
$$

Now using equation \eqref{uniconv2} we obtain

$$ 0\geq (p\,'-p)(\alpha^*(p\,')-\alpha^*(p))+\gamma |\alpha^*(p\,')-\alpha^*(p)|^2.
$$
Therefore
$$|p\,'-p\,|\,|\alpha^*(p\,')-\alpha^*(p)|\geq\gamma |\alpha^*(p\,')-\alpha^*(p)|^2\,,
$$
which implies
$$|\alpha^*(p\,')-\alpha^*(p)|\leq \frac{1}{\gamma}|p\,'-p\,|.$$
This shows that $\alpha^*$ is uniformly Lipschitz in $p$.

Now we prove that $\alpha^*$ is Lipschitz in $\theta$: for that, we suppose that $p$ is fixed.  Again by the definition of $\alpha^*$ and  by equation \eqref{uniconv1} we have
$$c(\theta\,',\alpha^*(\theta))+ \alpha^*(\theta)p\geq c(\theta\,',\alpha^*(\theta\,'))+ \alpha^*(\theta\,')p
$$
$$\geq  c(\theta\,',\alpha^*(\theta))+ \alpha^*(\theta)p+c\,'(\theta\,',\alpha^*(\theta)  )(\alpha^*(\theta\,')-\alpha^*(\theta))+\gamma|\alpha^*(\theta\,')-\alpha^*(\theta) |^2,
$$
and then
$$0\geq c\,'(\theta\,',\alpha^*(\theta)  )(\alpha^*(\theta\,')-\alpha^*(\theta))+\gamma|\alpha^*(\theta\,')-\alpha^*(\theta) |^2.$$ Using equation \eqref{uniconv2} we get
$$0\geq [c\,'(\theta\,',\alpha^*(\theta)  )-c\,'(\theta,\alpha^*(\theta))](\alpha^*(\theta\,')-\alpha^*(\theta))
+\gamma|\alpha^*(\theta\,')-\alpha^*(\theta) |^2. $$
As $c\,'(\theta,\alpha)$ is Lipschitz in the variable $\theta$ we have

$$0\geq -K|\theta\,'-\theta |\,|\alpha^*(\theta)-\alpha^*(\theta\,')|+\gamma|\alpha^*(\theta\,')-\alpha^*(\theta) |^2 .$$
Therefore
$$|\alpha^*(\theta)-\alpha^*(\theta\,')|\leq \frac{K}{\gamma}|\theta -\theta\,'|\,, $$
which implies that $\alpha^*$ is Lipschitz in $\theta$.
\end{proof}

In section \ref{uniqmfg} we will present and discuss monotonicity assumptions on $\psi$ and $h$, namely conditions
\eqref{psimon} and \eqref{monprop}, which will be necessary to prove uniqueness of solutions of the mean field model that will be presented in section \ref{mfmsec}.


\medskip

\subsection{The Hamilton-Jacobi ODE}\label{hjode}
\label{thjode}

Fix a admissible control $\beta$. 
Consider the system of ODE´s indexed by $i$ and $n$ given by
\begin{align*} - \frac{d\varphi}{dt} (i,n,t) =&
 \gamma^+_\beta (i,n,t) (\varphi(i,n+1,t)-\varphi(i,n,t)) +
 \gamma^-_\beta (i,n,t) (\varphi(i,n-1,t)-\varphi(i,n,t))\\
 &+ h\left(\bar \varphi( i,n,t)- \varphi(i,n,t),  \frac n N, i\right) \,,
 \end{align*}
where 
$\bar \varphi_{\beta}(i,n,t)=\varphi_{\beta}(1-i,n,t)$, and
$\gamma^+_\beta$ and $\gamma^-_\beta$ are given by \eqref{gamma}.
Since $\gamma^-_\beta(i,0,t)=0$ and $\gamma^+_\beta(i,N,t)=0$, the evaluation of $\varphi$ at $n+1$ and $n-1$ does not cause problems outside the range, resp. when $n=N$ or $n=0$).
By setting $\varphi_n(i,t)=\varphi(i, n, t)$ we write the previous ODE in compact notation:
\begin{equation}\label{HJ_prim}
- \frac{d\varphi_n}{dt}  =
 \gamma^+_\beta (\varphi_{n+1}-\varphi_{n}) +
 \gamma^-_\beta (\varphi_{n-1}-\varphi_{n}) +
 h\left(  \bar \varphi_n- \varphi_n,\frac n N,i\right)\,.
 \end{equation}
This system of ODE is called the Hamilton-Jacobi (HJ) ODE for player $N+1$ associated to the strategy  $\beta$ of the remaining $N$ players. We start by proving a
verification theorem, which is completely analogous to the optimal control verification theorem, see \cite{FS} for instance.
\begin{theorem}
Let $\varphi_{\beta}$ be a solution to \eqref{HJ_prim} satisfying the
terminal condition $\varphi_{\beta}(i,n,T)=\psi\left(i,\frac{n}N\right)$.
Then
$$u(i,n,t;\beta)=\varphi_{\beta}(i,n,t)\,.$$
Also, the control 
\begin{equation}\label{control}\bar \alpha(\beta)(i,n,t)\equiv \alpha^* \left(\bar \varphi_{\beta}(i, n, t) - \varphi_{\beta}(i, n, t), \frac n N, i\right),\end{equation}
is admissible 
and satisfies
$$u(i,n,t;\beta)= u(i,n,t,\beta,\bar \alpha(\beta))\,.$$
\end{theorem}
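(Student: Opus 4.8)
The plan is to run the classical verification argument based on Dynkin's formula \eqref{Dynkin}, exploiting that $\varphi_\beta$ solves \eqref{HJ_prim} with the prescribed terminal data. First I would fix an arbitrary admissible control $\alpha$ and apply \eqref{Dynkin} to $\varphi_\beta$ on $[t,T]$; invoking the terminal condition $\varphi_\beta(i,n,T)=\psi(i,\frac nN)$ gives
\[
\mathbb{E}^{\beta,\alpha}_{A_t(i, n)}\left[\psi\left(i(T),\tfrac{n(T)}N\right)\right]-\varphi_\beta(i,n,t)
=\mathbb{E}^{\beta,\alpha}_{A_t(i, n)}\left[\int_t^T \frac{d\varphi_\beta}{dt}+A^{\beta,\alpha}\varphi_\beta\, dr\right].
\]
The essential observation is that the $\gamma^+_\beta$ and $\gamma^-_\beta$ contributions to $A^{\beta,\alpha}\varphi_\beta$ are cancelled exactly by the corresponding terms in $\frac{d\varphi_\beta}{dt}$ dictated by \eqref{HJ_prim}, so that the integrand collapses to
\[
\frac{d\varphi_\beta}{dt}+A^{\beta,\alpha}\varphi_\beta
=\alpha\,(\bar\varphi_\beta-\varphi_\beta)-h\left(\bar\varphi_\beta-\varphi_\beta,\tfrac nN,i\right).
\]

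Next I would use the definition of $h$ as a minimum: for every $\alpha\geq 0$, writing $p=\bar\varphi_\beta-\varphi_\beta$, one has $h(p,\frac nN,i)\leq c(i,\frac nN,\alpha)+\alpha p$, whence the integrand is bounded below by $-c(i,\frac nN,\alpha)$. Substituting this into the identity above and rearranging yields $\varphi_\beta(i,n,t)\leq u(i,n,t,\beta,\alpha)$ for every admissible $\alpha$, and taking the infimum over $\alpha$ gives $\varphi_\beta(i,n,t)\leq u(i,n,t;\beta)$. For the reverse inequality I would note that the estimate just used is an equality precisely when $\alpha$ coincides with the minimizer $\alpha^*$; choosing $\alpha=\bar\alpha(\beta)$ from \eqref{control} turns the lower bound into the identity $\frac{d\varphi_\beta}{dt}+A^{\beta,\bar\alpha(\beta)}\varphi_\beta=-c(i,\frac nN,\bar\alpha(\beta))$, and retracing the computation produces $\varphi_\beta(i,n,t)= u(i,n,t,\beta,\bar\alpha(\beta))$. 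Combining the two facts forces $\varphi_\beta=u(\cdot\,;\beta)=u(\cdot,\beta,\bar\alpha(\beta))$, which is the assertion.

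The step I expect to require the most care is not the algebra but the verification that $\bar\alpha(\beta)$ is an admissible control, since Dynkin's formula may legitimately be applied to it only once this is established. To this end I would argue that $\varphi_\beta$, being a solution of the ODE system \eqref{HJ_prim}, is continuously differentiable in $t$ and, for each of the finitely many indices $(i,n)$, bounded on the compact interval $[0,T]$; hence $\bar\varphi_\beta-\varphi_\beta$ is bounded and continuous in time. Composing with $\alpha^*$, which is locally Lipschitz in $p$ uniformly in $\theta$ and Lipschitz in $\theta$ by the preceding Proposition, shows that $t\mapsto\bar\alpha(\beta)(i,n,t)$ is bounded and continuous, so $\bar\alpha(\beta)$ is indeed admissible and the argument closes.
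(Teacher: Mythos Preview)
Your argument is correct and follows essentially the same verification strategy as the paper: apply Dynkin's formula to $\varphi_\beta$, use that \eqref{HJ_prim} cancels the $\gamma^\pm_\beta$ terms so only $\alpha(\bar\varphi_\beta-\varphi_\beta)-h$ survives, then invoke the defining inequality $h\le c+\alpha p$ (with equality at $\alpha^*$) to get $\varphi_\beta\le u(\cdot,\beta,\alpha)$ for all admissible $\alpha$ and equality for $\bar\alpha(\beta)$. The paper merely organizes the same computation by first adding the running cost to both sides and then recognizing the integrand as the HJ equation; your treatment of the admissibility of $\bar\alpha(\beta)$ is in fact somewhat more detailed than the paper's.
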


\medskip
Thus a classical solution to the HJ equation associated to $\beta$ is the value function corresponding to $\beta$ and determines an optimal admissible control $\bar \alpha(\beta)$, for the reference player.

\begin{proof}
Let $\alpha$ be an admissible control.
By (\ref{Dynkin}) we have
$$
 \mathbb{E}^{\beta,\alpha}_{A_t(i, n)} \left[\varphi_{\beta}(i(T),n(T),T)\right]-\varphi_{\beta}(i,n,t)
=  \mathbb{E}^{\beta,\alpha}_{A_t(i, n)} \left[ \int_t^T
\frac{d\varphi_{\beta}}{dt} (i,n,r) + A^{\beta,\alpha}\varphi_{\beta}(i,n,r) dr  \right]\,,
 $$
where $A^{\beta,\alpha}$ is given by \eqref{generator}.
Adding
 $$\mathbb{E}^{\beta,\alpha}_{A_t(i, n)} \left[ \int_t^T
c\left(i(r),\frac{n(r)}N,\alpha(r)\right) dr  \right] +\varphi_{\beta}(i,n,t)\,,
$$
to both sides of the previous identity,
where $\alpha(r)=\alpha(i(r),n(r),r)$, and using the definition of $A^{\beta,\alpha}\varphi_{\beta}(i,n,r)$,
we have
%
%
\begin{align} \nonumber
&u(i,n,t;\beta,\alpha)=\\ \notag
&=\varphi_{\beta}(i,n,t)  + \mathbb{E}^{\beta,\alpha}_{A_t(i, n)} \Bigg[ \int_t^T
\frac{d\varphi_{\beta}}{dt} (i,n,r)   +
 \gamma^+_\beta(i,n,r) (\varphi_{\beta}(i,n+1,r)-\varphi_{\beta}(i,n,r))\\ \notag
 &+
 \gamma^-_\beta(i,n,r) (\varphi_{\beta}(i,n-1,r)-\varphi_{\beta}(i,n,r))
+ c\left(i,\frac{n}N,\alpha\right)
+ \alpha(r)(\bar \varphi_{\beta}- \varphi_{\beta})(i,n,r)   dr  \Bigg].
\end{align}
The equation above is valid for all admissible controls $\alpha$.
Now we can define
$$
\bar \alpha(\beta)(i,n,r)=\alpha^*\left(\bar \varphi_\beta( i,n,r)- \varphi_\beta(i,n,r), \frac n N, i\right),
$$
which is a bounded continuous Markov control 
and therefore admissible.
We have
\begin{align*}u(i,n,t;\beta) \leq &u(i,n,t,\beta,\alpha^*) = \varphi_{\beta}(i,n,t) \\
&+ \mathbb{E}^{\beta,\alpha^*}_{A_t(i, n)} \Bigg[ \int_t^T
\frac{d\varphi_{\beta}}{dt} (i,n,r)   +
 \gamma^+_\beta(n,r) (\varphi_{\beta}(i,n+1,r)-\varphi_{\beta}(i,n,r))\\
 &+
 \gamma^-_\beta(n,r) (\varphi_{\beta}(i,n-1,r)-\varphi_{\beta}(i,n,r))
+ h\left(\bar \varphi_{\beta}(i,n,r)- \varphi_{\beta}(i,n,r), \frac n N, i\right)dr\Bigg]\,.
\end{align*}
Now, we see that the integrand vanishes since $\varphi_\beta$ is a solution to HJ, and therefore we have $u(i,n,t;\beta) \leq \varphi_{\beta}(i,n,t)$.

Now we prove the other inequality:
\begin{align*}&u(i,n,t;\beta)= \inf_{\alpha} u(i,n,t,\beta,\alpha)= \varphi_{\beta}(i,n,t)\\
& + \inf_{\alpha} \mathbb{E}^{\beta,\alpha}_{A_t(i, n)} \Bigg[ \int_t^T
\frac{d\varphi_{\beta}}{dt} (i,n,r)   +
 \gamma^+_\beta(i,n,r) (\varphi_{\beta}(i,n+1,r)-\varphi_{\beta}(i,n,r))\\
 & +
 \gamma^-_\beta(i,n,r) (\varphi_{\beta}(i,n-1,r)-\varphi_{\beta}(i,n,r))
+ c\left(i,\frac{n}N,\alpha\right)
+ \alpha(r)(\bar \varphi_{\beta}- \varphi_{\beta})(i,n,r)   dr  \Bigg]\\ \geq&
\;\varphi_{\beta}(i,n,t)+ \mathbb{E}^{\beta}_{A_t(i, n)} \Bigg[ \int_t^T
\frac{d\varphi_{\beta}}{dt} (i,n,r)   +
 \gamma^+_\beta(i,n,r) (\varphi_{\beta}(i,n+1,r)-\varphi_{\beta}(i,n,r)) \\
 &+
 \gamma^-_\beta(i,n,r) (\varphi_{\beta}(i,n-1,r)-\varphi_{\beta}(i,n,r))
+ h\left(\bar \varphi_{\beta}(i,n,r)- \varphi_{\beta}(i,n,r), \frac n N, i\right)   dr  \Bigg]\\
=& \;\varphi_{\beta}(i,n,t)\,,
\end{align*}
where the last equation holds because the integrand vanishes since $\varphi$ is a solution to HJ.

\medskip

Thus we have proved that $u(i,n,t;\beta)=\varphi_{\beta}(i,n,t)$.
\end{proof}

\subsection{Maximum principle}
\label{mp0}

Here we prove that the solutions to the Hamilton-Jacobi equations are uniformly bounded
independently on the control $\beta$. We denote by
\[
\|u(t)\|_{\infty}=\max_{n,i} |u_n(i, t)|,
\]
and
\[
M=\max_{(i, \theta)\in\{0,1\}\times [0,1]} |h(0, \theta, i)|.
\]

\begin{proposition}\label{maxpri}
Let $u$ be a solution to \eqref{HJ_prim}.
For all $0\leq t \leq T$ we have
\[
 \|u(t)\|_{\infty}\leq \|u(T)\|_{\infty}+2M (T-t).
 \]

\end{proposition}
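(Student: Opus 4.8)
The plan is to establish a comparison (maximum-principle) estimate directly from the structure of \eqref{HJ_prim}, exploiting two features: the discrete-Laplacian terms $\gamma^+_\beta(\varphi_{n+1}-\varphi_n)+\gamma^-_\beta(\varphi_{n-1}-\varphi_n)$ carry a favorable sign at an extremum because $\gamma^\pm_\beta\ge 0$, and $h(p,\theta,i)$ is increasing in $p$. Rather than bounding $\|u(t)\|_\infty$ in one step, I would prove two one-sided bounds, namely $\max_{n,i}u_n(i,t)\le \|u(T)\|_\infty+M(T-t)$ and $\min_{n,i}u_n(i,t)\ge -\|u(T)\|_\infty-M(T-t)$, and then combine them to control $|u_n(i,t)|$.

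For the upper bound, set $w(t)=\max_{n,i}u_n(i,t)$ and fix a time $t$ together with an index $(i^*,n^*)$ at which the maximum is attained. Since $u_{n^*}(i^*,\cdot)$ dominates all neighbouring values, $\varphi_{n^*+1}-\varphi_{n^*}\le 0$ and $\varphi_{n^*-1}-\varphi_{n^*}\le 0$, and because $\gamma^+_\beta,\gamma^-_\beta\ge 0$ (recall they vanish at the boundary indices $n=0,N$ by \eqref{gamma}, so the out-of-range evaluations are harmless) the whole discrete-diffusion contribution is $\le 0$. Moreover $\bar\varphi_{n^*}-\varphi_{n^*}=u_{n^*}(1-i^*,\cdot)-u_{n^*}(i^*,\cdot)\le 0$, so by monotonicity of $h$ in its first argument $h(\bar\varphi_{n^*}-\varphi_{n^*},\tfrac{n^*}N,i^*)\le h(0,\tfrac{n^*}N,i^*)\le M$. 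Reading off \eqref{HJ_prim} then gives $\frac{du_{n^*}}{dt}\ge -M$ at the active index, which I would convert into $w(t)\le w(T)+M(T-t)$.

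The lower bound is the mirror image: at a minimizing index $(i_*,n_*)$ the diffusion terms are $\ge 0$ and the argument of $h$ is $\ge 0$, so $h\ge h(0,\cdot,\cdot)\ge -M$, yielding $\frac{du_{n_*}}{dt}\le M$ and hence $\min_{n,i}u_n(i,t)\ge \min_{n,i}u_n(i,T)-M(T-t)$. Combining the two with $\max_{n,i}u_n(i,T)\le\|u(T)\|_\infty$ and $-\min_{n,i}u_n(i,T)\le\|u(T)\|_\infty$ gives $|u_n(i,t)|\le\|u(T)\|_\infty+M(T-t)$, which in particular yields the stated estimate (with room to spare in the constant).

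The one genuine technical point — and the step I would treat most carefully — is that $w(t)=\max_{n,i}u_n(i,t)$ is only Lipschitz, not $C^1$, since the maximizing index may change with $t$. I would handle this by noting that $w$, being a finite maximum of $C^1$ functions, is Lipschitz, hence absolutely continuous and differentiable at a.e.\ $t$; at any such $t$ one checks that $w'(t)=\frac{du_{n^*}}{dt}$ for an active index $(i^*,n^*)$, because $w-u_{n^*}\ge 0$ vanishes at $t$ and so has zero derivative there. The pointwise bound above then gives $w'(t)\ge -M$ for a.e.\ $t$, and integrating via the fundamental theorem of calculus yields $w(T)-w(t)\ge -M(T-t)$, i.e.\ the desired one-sided estimate; the same remark applies to the minimum. (Equivalently, one may phrase the last step through the upper Dini derivative of $w$ and the comparison lemma for $D^+w\ge -M$, avoiding differentiability of the envelope altogether.)
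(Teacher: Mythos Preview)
Your argument is correct and uses the same structural ingredients as the paper---nonnegativity of $\gamma^\pm_\beta$ at an extremum and monotonicity of $h$ in $p$---but the packaging differs. The paper works with the perturbed function $\tilde u=u+\rho(T-t)$ and looks at its extremum over the full space-time cylinder $\{0,1\}\times\{0,\dots,N\}\times[0,T]$: choosing $M<\rho<2M$ (resp.\ $-2M<\rho<-M$) forces a strict sign on $-\tfrac{d\tilde u}{dt}$ at a minimizer (resp.\ maximizer), which rules out the extremum occurring before $T$ and yields the bound with the constant $2M$. Your route instead tracks the envelope $w(t)=\max_{n,i}u_n(i,t)$, derives the a.e.\ differential inequality $w'\ge -M$, and integrates. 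The trade-off is this: the paper's perturbation device sidesteps entirely the issue of differentiating a pointwise maximum (it only needs the derivative of a single smooth trajectory at a single point), while your approach requires the Rademacher/Dini-derivative care you correctly flagged, but rewards you with the sharper constant $M$ in place of $2M$.
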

\begin{proof}
Let $u$ be a solution to \eqref{HJ_prim}. Let $\tilde u=u+\rho (T-t)$. Then
$$
- \frac{d\tilde u_n}{dt}  =\rho+
 \gamma^+_\beta (\tilde u_{n+1}-\tilde u_{n}) +
 \gamma^-_\beta (\tilde u_{n-1}-\tilde u_{n}) + h\left(\bar {\tilde u}_n-\tilde u_n, \frac n N, i\right)\,.
$$
Let $(i,n,t)$ be a minimum point of $\tilde u$ on $\{0,1\}\times\{0,1, \cdots, N\}\times [0,T]$.
We have $\tilde u_n(i,t)\leq \tilde u_{n-1}(i,t)$ and $u_n(i,t)\leq u_{n+1}(i,t)$. This implies
$\gamma^-_\beta (\tilde u_{n-1}-\tilde u_n)\geq 0$ and $\gamma^+_\beta (\tilde u_{n+1}-\tilde u_n)\geq 0$.
We also have
$\tilde u_n(i,t) \leq  \tilde u_n(1-i,t) = \bar {\tilde u}_n(i,t)$, which implies $(\bar {\tilde u}_n- \tilde u_n)(i,t)\geq 0$.
Hence
$$-\frac{d\tilde u_n}{dt}(i,t)  \geq h\left(\bar{\tilde u}_n-\tilde u_n, \frac n N, i\right)+\rho\geq h\left(0,\frac n N, i\right)+\rho \,,$$
because $h(p, \theta, i)$ is monotone increasing in $p$. Furthermore, if we take $M<\rho< 2M$ we get
\[
-\frac{d\tilde u_n}{dt}(i,t)>0.
\]
This shows that the minimum of $\tilde u$ is achieved at $T$ hence
\[
u_n(t, i) \geq -\|u(T)\|_{\infty}-2M (T-t).
\]

\bigskip

Similarly, let
$(i,n,t)$ be a maximum point of $\tilde u$ on $\{0,1\}\times\{0,1, \cdots, N\}\times [0,T]$.
We have $\tilde u_n(i,t)\geq \tilde u_{n-1}(i,t)$ and $u_n(i,t)\geq u_{n+1}(i,t)$, and this implies
$\gamma^-_\beta (\tilde u_{n-1}-\tilde u_n)\leq 0$ and $\gamma^+_\beta (\tilde u_{n+1}-\tilde u_n)\leq 0$.
We also have
$\tilde u_n(i,t) \geq  \tilde u_n(1-i,t) = \bar {\tilde u}_n(i,t)$, which implies $(\bar {\tilde u}_n- \tilde u_n)(i,t)\leq 0$.
Hence
$$-\frac{d\tilde u_n}{dt}(i,t)  \leq h\left(\bar {\tilde u}_n-{\tilde u}_n, \frac n N, i\right)+\rho\leq h\left(0,\frac n N, i\right)+\rho \,,$$
because $h(p, \theta, i)$ is monotone increasing in $p$. Furthermore, if we take $-2 M < \rho <-M$ we get
\[
-\frac{d\tilde u_n}{dt}(i,t)<0.
\]
This shows that the maximum of $\tilde u$ is achieved at $T$ hence
\[
u_n(t, i)\leq \|u(T)\|_{\infty}+2M (T-t).
\]

\end{proof}

\subsection{Equilibrium solutions}
\label{eqsols}

We now consider the equilibrium situation in which the best response of any player to a control $\beta$ is $\beta$ itself.

\medskip

\begin{definition}
Let $\beta$ be an admissible control. This control $\beta$ is a Nash equilibrium if  $\bar\alpha(\beta)=\beta$.
\end{definition}

\begin{theorem}\label{nash_ex_uniq}
There exists a Nash equilibrium, i.e, an admissible Markov control $\beta_*$, which satisfies
$\bar\alpha(\beta_*)=\beta_*$. Moreover, the Nash equilibrium is unique.
\end{theorem}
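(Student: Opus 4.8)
The plan is to recast the Nash equilibrium condition $\bar\alpha(\beta)=\beta$ as a closed terminal value problem for the value function alone, and then solve that problem by Cauchy--Lipschitz theory combined with the a priori bounds of Proposition \ref{maxpri}. First I would observe that if $\beta$ is a Nash equilibrium then, substituting $\beta=\bar\alpha(\beta)=\alpha^*\!\left(\bar\varphi_{\beta}-\varphi_{\beta},\frac nN,i\right)$ into the rates $\gamma^\pm_\beta$ of \eqref{HJ_prim}, the Hamilton--Jacobi system becomes a system of ordinary differential equations whose right-hand side depends only on $\varphi$ (through the feedback $\alpha^*$), together with the terminal data $\varphi(i,n,T)=\psi(i,\frac nN)$. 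Conversely, given a solution $\varphi$ of this closed system, the control $\beta:=\alpha^*\!\left(\bar\varphi-\varphi,\frac nN,i\right)$ is bounded and continuous in time, hence admissible; and since for a fixed admissible $\beta$ the problem \eqref{HJ_prim} has a Lipschitz right-hand side in $\varphi$ and therefore a unique solution, one checks $\varphi=\varphi_\beta$, whence $\bar\alpha(\beta)=\beta$. This sets up a bijection between Nash equilibria and solutions of the closed terminal value problem, reducing the theorem to existence and uniqueness for the latter.

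Next I would solve the closed system. Because $\alpha^*$ is locally Lipschitz in $p$ uniformly in $\theta$ (by the Proposition), the rates $\gamma^\pm$ depend linearly on $\beta$ with the bounded coefficients $N-n$ and $n$, and $h(\cdot,\theta,i)$ is Lipschitz in $p$ on bounded sets (its $p$-derivative being $\alpha^*$), the vector field of the closed system is locally Lipschitz in the finite-dimensional variable $\big(\varphi(i,n,\cdot)\big)_{i,n}\in\Rr^{2(N+1)}$. Cauchy--Lipschitz then yields a unique local solution, integrated backward from $t=T$.

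To upgrade local to global existence and to rule out finite-time blow up I would invoke Proposition \ref{maxpri}. On any subinterval $[t_0,T]$ of the maximal interval of existence, $\varphi$ solves \eqref{HJ_prim} with the admissible control $\beta=\alpha^*(\bar\varphi-\varphi,\cdot)$; and since the proof of the maximum principle uses only $\gamma^\pm_\beta\geq0$ and the monotonicity of $h$ in $p$, it applies verbatim and gives $\|\varphi(t)\|_{\infty}\leq\|\varphi(T)\|_{\infty}+2M(T-t)\leq\|\varphi(T)\|_{\infty}+2MT$. Thus $\varphi$ stays in a fixed compact set independent of $t_0$, on which the vector field is bounded and Lipschitz, and standard continuation forces the solution to exist on all of $[0,T]$. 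Uniqueness on $[0,T]$ follows from Cauchy--Lipschitz uniqueness, all solutions lying in the same compact set, and hence via the bijection the Nash equilibrium is unique.

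The main obstacle, and the reason the maximum principle is indispensable, is that $\alpha^*$ is only \emph{locally} (not globally) Lipschitz, so Cauchy--Lipschitz alone yields only local solutions. The a priori $L^\infty$ bound of Proposition \ref{maxpri}, which one must verify applies to the nonlinear closed system and not merely to \eqref{HJ_prim} with a frozen $\beta$, is precisely what confines the trajectory to a region of controlled Lipschitz constant and thereby prevents blow up before $t=0$.
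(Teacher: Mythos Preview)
Your proposal is correct and follows essentially the same approach as the paper: reduce the fixed-point condition $\bar\alpha(\beta)=\beta$ to the closed nonlinear terminal-value system \eqref{eqhj}--\eqref{gammaeq}, and invoke boundedness of $u$ (via the maximum principle) together with the Lipschitz property of the right-hand side to conclude well-posedness. The paper's proof is considerably terser, stating only that ``\eqref{eqhj} is well posed because $u_n$ is bounded and the righthand side is Lipschitz''; your version spells out the bijection between equilibria and solutions, the distinction between local and global existence, and the (correct and important) observation that Proposition~\ref{maxpri} must be seen to apply to the nonlinear closed system rather than merely to \eqref{HJ_prim} with frozen $\beta$---points the paper leaves implicit.
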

\begin{proof}
It suffices to observe that, by \eqref{control}
\[
\beta_*(i,n,t)=\alpha^*\left(\bar \varphi_{\beta^*}-\varphi_{\beta^*}, \frac n N, i\right),
\]
and hence the Markov control can be obtained by solving the system of  nonlinear differential equations
\begin{equation}
\label{eqhj}
- \frac{du_n}{dt}  =
 \gamma^+_n (u_{n+1}-u_{n}) +
 \gamma^-_n (u_{n-1}-u_{n}) + h\left(\bar u_n-u_n, \frac n N, i\right)\,,
\end{equation}
with terminal condition $u(i,n,T)=\psi\left(i,\frac{n}N\right)$, where $\gamma^{\pm}_n$ are given by
\begin{align}\label{gammaeq}
\gamma^+_n (i,t)&= (N-n) \alpha^* \left(\bar u_{n+1-i}-u_{n+1-i}, \frac{n+1- i}{N},1\right)\\ \notag
\gamma^-_n(i,t)&= n \alpha^* \left(\bar u_{n-i}-u_{n-i}, \frac{n-i}{N}, 0\right)\,.
\end{align}
Note that \eqref{eqhj} is well posed because $u_n$ is bounded and the righthand side is Lipschitz. Hence
it follows the existence and uniqueness of a Nash equilibrium.
\end{proof}

For the record we give here some properties of $\gamma^{\pm}_n$:
\begin{equation}\nonumber
|\gamma^\pm_n|\leq CN,
\end{equation}
and
\begin{equation}\nonumber
|\gamma^\pm_{n+1}-\gamma^\pm_n|\leq C+CN \|u_{n+1}-u_n\|_{\infty}.
\end{equation}

\subsection{An example}
\label{expl}

\medskip
Let $f:\{0,1\} \times [0,1] \rar \Rr$  and $g:\{0,1\} \times [0,1] \rar \Rr$  be two continuous function.
We take $$c(i,\theta,\alpha)
= f(i,\theta)+\frac{\alpha^2}{2}-\alpha g(i, \theta)\,.$$

This example could model, for instance, the marketshare of cellular companies where there are only two competitors and $N$ individual costumers.  If the state of the player represents the company he uses, we can think of $g(i,\theta)$ as a bonus the company $i$ offers customers of company $1-i$ in case they decide to switch.
If there are no such  bonus, we set $g=0$.

Then
$$
h(p, \theta, i)=
\min_{\alpha \geq 0} \left[c(i,\theta,\alpha)+\alpha p \right]=f(i,\theta)-\frac{((g(i,\theta)-p)^+)^2}{2}\,,$$
and
$$\alpha^*(p, \theta, i)=\argmin_{\alpha \geq 0} \left[c(i,\theta,\alpha)+\alpha p) \right]=(g(i, \theta)-p)^+\,.$$

Therefore \eqref{eqhj} becomes
\begin{equation}\label{exist_Nash} - \frac{du}{dt}  =
f-\frac{((u-\bar u+g)^+)^2}{2}
 + (N-n)(u-\bar u+g)^+_{1,n+1-i} (u_{n+1}-u_n) +
 n(u-\bar u+g)^+_{0,n-i} (u_{n-1}-u_n).
\end{equation}

By the results of section \ref{mp0} we know that any solution to \eqref{exist_Nash} is bounded a-priori.
Hence, if $f$ and $g$ are Lipschitz, \eqref{exist_Nash} has a unique solution $u$. Therefore, there
exists a unique Nash equilibrium.

\smallskip


\section{A mean field model}
\label{mfmsec}

This section is dedicated to a mean field model which, as we will see
in the next section, corresponds to the limit as the number of players $N+1\to \infty$.
We start in \S \ref{tcpmfm} by discussing the model and its derivation under the mean field
hypothesis. Then, in \S \ref{existmfg} we address existence of solutions. Uniqueness of solutions (under a  monotonicity hypothesis similar to the ones in \cite{ll1, ll2}) is established in \S \ref{uniqmfg}. Finally, in \S \ref{btte},
we continue the study of the model problem from \S \ref{expl}.

\subsection{The control problem in the mean field model and Nash equilibria}\label{tcpmfm}

If the number of players is very large, we expect their distribution between the two states to be a deterministic function of the time $t$, as it would happen if we could somehow apply the law of large numbers.
So, we suppose the fraction of players in state $0$ is given by a deterministic function $\theta(t)$.
If all players use the same Markovian control $\beta=\beta(i,t)$, which now only depends on $i$ and $t$, then $\theta$ is a solution to
\begin{equation}\label{eq_pi}
\frac{d\theta}{dt} = (1-\theta) \beta_1-\theta \beta_0\hspace{2cm}    \theta(0)=\bar \theta \,,
 \end{equation}
where $\beta_i$ denotes the function $t \rar \beta(i,t)$, and $0\leq \bar \theta\leq 1$ is given and represents the initial distribution.
We suppose here that $\beta_i$ are continuous and bounded, for $i=0$ and $i=1$, and call such controls  admissible controls.

We can now consider the optimization problem
from a single player point of view. As before, we fix an individual player as the reference player and assume he can choose any admissible
 control $\alpha$, while other players have a probability distribution among states determined by \eqref{eq_pi}.
Let
$$
u(i,t,\alpha)= \mathbb{E}^{\alpha}_{i(t)=i} \left[ \int_t^T c(i(s),\theta(s),\alpha(i(s),s))ds + \psi(i(T),\theta(T)) \right]\;,
$$
where $i(t)$ is 
a controlled Markov chain switching between state $0$ and $1$ with rate $\alpha$. 
We assume this player  looks for an admissible control $\alpha$ which solves
$$
u(i,t)= \inf_{\alpha} u(i,t,\alpha).
$$
Note that the situation is now simpler than in the $N+1$-player game, because $\theta$ is deterministic and the only stochastic process is $i(t)$ whose switching rate is controlled by  $\alpha$. We call
$u(i,t)$ the value function associated to the mean field distribution $\theta$.

Consider the following HJ equation:
\begin{equation}\label{HJ_MFG}
 - \frac{du}{dt}  =  
h(\bar u- u, \theta, i)\,.
 \end{equation}
As in the verification theorem of \S \ref{thjode}, any solution $u$ to the equation above, with the terminal condition $u(i,T)=\psi(i,\theta(T))$, is the value function associated to $\theta$.
Furthermore, the optimal control is $\alpha^*(\bar u- u, \theta, i)$.

Under the symmetry hypothesis, all players must use the same control when the Nash equilibria is attained. In other words, Nash equilibria is the fixed point to the operator described above, i.e., the operator that uses the control  $\beta$ to calculate $\theta$ as a solution to \eqref{eq_pi},
and after that determines the control $\alpha^*(\bar u- u, \theta, i)$ where $u$ is the solution to the HJ equation \eqref{HJ_MFG} determined by $\theta$, making the control $\alpha^*(\bar u- u, \theta, i)$ the image of $\beta$ under this operator.

This leads then to the following system
of ordinary differential equations
\begin{equation}\label{MFG_eq}
\begin{cases}
- \frac{du}{dt}  = h(\bar u- u, \theta, i)\\
\frac{d\theta}{dt}=(1-\theta) \alpha^*(u(0,t)-u(1,t), \theta, 1)-\theta \alpha^*(u(1,t)-u(0,t), \theta, 0),
\end{cases}
\end{equation}
with the boundary data
\begin{equation}\label{MFG_condin}
\begin{cases}
u(i,T)=\psi(i, \theta(T))\\
\theta(0)=\bar \theta\,.
\end{cases}
\end{equation}

Note that from the ODE point of view this problem is somewhat non-standard as some of the
variables have initial conditions whereas other variables have prescribed terminal data. We call
this the initial-terminal value problem.

\subsection{Existence of Nash Equilibria in the MFG}
\label{existmfg}

We now address the existence of solutions to \eqref{MFG_eq} satisfying the initial-terminal conditions \eqref{MFG_condin}.
The proof of existence will be based upon a fixed point argument, using the operator $\xi$ described in the following, which is the analogous of the operator acting on the controls described in the last section, but now acting on distributions.

\begin{proposition}
There exists a solution to \eqref{MFG_eq} satisfying the initial-terminal conditions \eqref{MFG_condin}.
\end{proposition}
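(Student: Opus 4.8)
The plan is to set up a fixed point argument using the operator $\xi$ alluded to in the text, and to apply Schauder's theorem on the closed convex set $\mathcal{K}=\{\theta\in C([0,T]):0\le\theta(t)\le1\}$ equipped with the sup norm. Given $\theta\in\mathcal{K}$, I would define $\xi(\theta)$ in two steps. First, solve the Hamilton--Jacobi equation \eqref{HJ_MFG} backward in time with terminal data $u(i,T)=\psi(i,\theta(T))$; since \eqref{HJ_MFG} is really the coupled pair $-\dot u(0,t)=h(u(1,t)-u(0,t),\theta(t),0)$ and $-\dot u(1,t)=h(u(0,t)-u(1,t),\theta(t),1)$, and $h$ is Lipschitz in its first argument (with $\partial_p h=\alpha^*$) as well as Lipschitz in $\theta$, standard ODE theory yields a unique solution $u=u[\theta]$ on $[0,T]$. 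Second, using this $u$, solve the forward problem \eqref{eq_pi}, namely $\dot{\tilde\theta}=(1-\tilde\theta)\alpha^*(u(0,t)-u(1,t),\tilde\theta,1)-\tilde\theta\,\alpha^*(u(1,t)-u(0,t),\tilde\theta,0)$ with $\tilde\theta(0)=\bar\theta$; because $\alpha^*$ is locally Lipschitz in $p$ and Lipschitz in its $\theta$-slot (by the proposition on $\alpha^*$ established earlier), the right-hand side is Lipschitz in $\tilde\theta$ and admits a unique solution $\tilde\theta=:\xi(\theta)$. A fixed point of $\xi$ is precisely a solution of the coupled system \eqref{MFG_eq}--\eqref{MFG_condin}.

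Next I would check that $\xi$ maps $\mathcal{K}$ into a compact subset of itself. Invariance of the range $[0,1]$ is immediate from $\alpha^*\ge0$: where $\tilde\theta=0$ the right-hand side equals $\alpha^*(\,\cdot\,,0,1)\ge0$, and where $\tilde\theta=1$ it equals $-\alpha^*(\,\cdot\,,1,0)\le0$, so $\tilde\theta$ cannot leave $[0,1]$. For compactness I would establish a uniform Lipschitz bound on $\xi(\theta)$ independent of $\theta$. The key input is a maximum-principle bound on $u[\theta]$ uniform in $\theta$: repeating the argument of Proposition \ref{maxpri} for the simpler $\gamma$-free equation \eqref{HJ_MFG} gives $\|u(t)\|_\infty\le\|\psi\|_\infty+2M(T-t)$ with $M=\max_{i,\theta}|h(0,\theta,i)|$, so $|u(0,t)-u(1,t)|$ is bounded by a constant independent of $\theta$. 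Since $\alpha^*$ is continuous and its first argument now ranges over a fixed bounded interval, $\alpha^*$ is bounded by some constant $C$ along the forward ODE, whence $|\dot{\tilde\theta}|\le C$. Thus $\xi(\mathcal{K})$ consists of uniformly Lipschitz functions, precompact in $C([0,T])$ by Arzel\`a--Ascoli.

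It remains to show that $\xi$ is continuous on $\mathcal{K}$, after which Schauder's fixed point theorem delivers the desired solution. Continuity follows by composing two continuous-dependence statements: if $\theta_k\to\theta$ uniformly, then the terminal data $\psi(i,\theta_k(T))$ and the coefficients $h(\,\cdot\,,\theta_k(t),i)$ converge uniformly (using the Lipschitz dependence of $\psi$ and $h$ on $\theta$), so continuous dependence for the backward ODE gives $u[\theta_k]\to u[\theta]$ uniformly, and then the forward ODE defining $\tilde\theta$ has uniformly convergent coefficients, so a Gronwall estimate gives $\xi(\theta_k)\to\xi(\theta)$. The main obstacle, and the only place requiring genuine care, is exactly this continuity-and-compactness bookkeeping: one must verify that the a priori bounds on $u$ are truly independent of $\theta$, so that $\alpha^*$ is evaluated only on a fixed compact set where its local-Lipschitz regularity upgrades to a uniform bound, and that the two continuous-dependence estimates chain together cleanly. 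No contraction is needed, since only existence, not uniqueness, is asserted here; uniqueness is deferred to the monotonicity hypotheses of the later section.
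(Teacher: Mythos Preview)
Your proposal is correct and follows essentially the same Schauder/Arzel\`a--Ascoli fixed point argument as the paper: define the map $\xi$ by solving the backward HJ equation and then the forward distribution equation, show $\xi$ is continuous and lands in a uniformly Lipschitz (hence precompact) subset of $C([0,T];[0,1])$, and apply the fixed point theorem. The only cosmetic difference is that you insert the unknown $\tilde\theta$ rather than the frozen input $\theta$ into the second slot of $\alpha^*$ in the forward step, which yields a slightly different operator but the same fixed points; your write-up is also more explicit than the paper's about the maximum-principle bound on $u$, the invariance of $[0,1]$, and the continuous-dependence chaining.
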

\begin{proof}
We need to solve (\ref{MFG_eq}) and (\ref{MFG_condin}) which can be rewritten as
\begin{equation}\label{edist}\frac{d\theta }{dt} =  (1-\theta) \alpha_1 -\theta \alpha_0     \hspace{2cm}    \theta(0)=\bar\theta\end{equation}
\begin{equation}\label{ehj} - \frac{du}{dt}  =
h(\bar u- u,\theta,i) \hspace{1cm} u(i,T)=\psi(i,\theta(T)) \, \end{equation}
where
$$ \alpha = \alpha^*(\bar u- u,\theta,i).$$

Let $\mathcal{F}$ be the set of continuous functions defined on $[0,T]$ and taking values in $[0,1]$, with the $C^0$ norm.
Consider the function $\xi:\mathcal{F}\rar \mathcal{F}$ that is obtained in the following way:
given $\theta \in \mathcal{F}$, let $u^{\theta}$ be the solution of equation (\ref{ehj}).
Let $\beta^{\theta} = \alpha^*(\bar u^{\theta} -u^{\theta} , \theta, i)\,,$
and then let $\xi(\theta)$ be the solution to
$\frac{d\theta }{dt} =  (1-\theta) \beta^{\theta}_1 -\theta \beta^{\theta}_0$ and $\theta(0)=\bar \theta$.

From standard ODE theory we know $\xi$ is a continuous function from $\mathcal{F}$ to $\mathcal{F}$.
Moreover, as $\beta$ is bounded, $\xi(\theta)$ is Lipschitz, with Lipschitz  constant $\Lambda$ independent of $\theta$.

Now consider the set $\mathcal{C}$ of all Lipschitz continuous function in $\mathcal{F}$ with Lipschitz constant bounded
by $\Lambda$. This is a set of uniformly bounded and equicontinuous functions. Thus, by Arzela-Ascoli, it is a relatively compact set. It is also clear that it is a convex set.
Hence, by Brouwer fixed point theorem, $\xi$ has a fixed point in $\mathcal{C}$.
\end{proof}

\subsection{Uniqueness of Equilibria}
\label{uniqmfg}
To establish uniqueness we need to use the monotonicity method of \cite{ll1, ll2}.

We will suppose the following monotonicity hypothesis on $\psi$:
\begin{equation}\label{psimon}
(x-y)[\psi(0,x)-\psi(0,y)] + (y-x)[\psi(1,x)-\psi(1,y)]
\geq 0 \,,
\end{equation}
for any $x$ and $y$ in $[0,1]$.
This hypothesis holds, for instance, if we suppose that $\psi$ is differentiable on its second variable, and
\begin{equation}\nonumber
    \frac{d\psi}{d\theta}(0,\theta) - \frac{d\psi}{d\theta}(1,\theta) \geq 0\,,
\end{equation}
or if we suppose that $\psi(0,\theta)$ is non-decreasing as function of $\theta$ and  $\psi(1,\theta)$ is non-increasing as function of $\theta$,
which could be interpreted as a penalization on  crowded states.

Now, from the concavity of $h$ in $p$ we have, for all $p,q,\theta$ and $i$
\begin{equation}
\label{concvt}
h(q, \theta,i)-h(p, \theta,i)-\alpha^*(p,\theta,i) (q-p)\leq 0,
\end{equation}
because $\alpha^*(p, \theta,i) \in \partial^+_p h(p, \theta,i)$.
We suppose the additional monotonicity property
\begin{align}
\label{monprop}
\theta \Big(h(q, \tilde \theta,0)-h(q, \theta,0)\Big) &+\tilde \theta \Big(h(p, \theta,0)-h(p, \tilde \theta,0)\Big)\\\notag
+
(1-\theta)  \Big(h(-q, \tilde \theta,1)-h(-q, \theta,1)\Big)&+(1-\tilde \theta) \Big(h(-p, \theta,1)-h(-p, \tilde \theta,1)\Big)\leq -\gamma |\theta-\tilde \theta|^2,
\end{align}
for all $p, q\in \Rr$, for some $\gamma >0$.
This property will hold, for instance,
if
\begin{equation}
\label{mpropf0}
h(p, \theta,i)=h_0(p)+f(i,\theta),
\end{equation}
with $f$ satisfying
\begin{equation}
\label{mpropf}
(\theta-\tilde \theta)(f(0,\tilde \theta)-f(0,\theta))
+(\tilde \theta -\theta) (f(1,\tilde \theta)-f(1,\theta))
\leq -\gamma |\theta- \tilde \theta|^2.
\end{equation}

Note that the example of section \ref{expl} easily fits the previous conditions \eqref{mpropf0} and \eqref{mpropf} provided we suppose  $g$ is a constant function  and  the functions
$\theta \mapsto f(0,\theta)$ and
$\theta \mapsto f(1,\theta)$
satisfy
$$(\tilde \theta- \theta)(f(0,\tilde \theta)-f(0,\theta))
\geq \frac{\gamma}{2} |\theta- \tilde \theta|^2$$
and
$$(\tilde \theta -\theta) (f(1,\tilde \theta)-f(1,\theta))
\leq -\frac{\gamma}{2} |\theta- \tilde \theta|^2,$$
which could be seen as a consequence of the fact that the running cost is greater when the reference player is in the more crowded state (i.e. when $\theta=1$ if $i=0$ and when $\theta=0$ if $i=1$).

Then
, using \eqref{concvt} and \eqref{monprop} we obtain
\begin{equation}
\label{keyineq}
\begin{split}
&\theta \Big(h(q, \tilde\theta,0)-h(p, \theta,0)-\alpha^*(p,\theta,0) (q-p)\Big) +\tilde \theta \Big(h(p, \theta,0)-h(q, \tilde \theta,0)-\alpha^*(q,\tilde \theta,0) (p-q)\Big)\\
&+(1-\theta)\Big(h(-q,\tilde \theta,1)-h(-p, \theta,1)-\alpha^*(-p,\theta,1)(p-q)   \Big)\\
&+(1-\tilde \theta)\Big(h(-p,\theta,1)-h(-q,\tilde \theta,1)-\alpha^*(-q,\tilde \theta,1)(q-p)  \Big)  \leq
-\gamma |\theta-\tilde \theta|^2.
\end{split}
\end{equation}

\begin{theorem}\label{ex_Nahs_MFG}
Under the monotonicity hypothesis \eqref{psimon} and \eqref{monprop},
the system (\ref{edist}) and (\ref{ehj}) has a unique solution $(\theta,u)$.
\end{theorem}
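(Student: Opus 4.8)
The plan is to adapt the Lasry--Lions monotonicity/duality argument to the two-state setting, where the key inequality \eqref{keyineq} has been prepared precisely for this purpose. Suppose $(u,\theta)$ and $(\tilde u,\tilde\theta)$ are two solutions of \eqref{edist}--\eqref{ehj}. Writing the state-$0$ and state-$1$ masses as $m_0=\theta$, $m_1=1-\theta$ (and likewise $\tilde m_0=\tilde\theta$, $\tilde m_1=1-\tilde\theta$), I would introduce the pairing functional
\[
P(t)=\sum_{i\in\{0,1\}}\big(u(i,t)-\tilde u(i,t)\big)\big(m_i(t)-\tilde m_i(t)\big)
=(\theta-\tilde\theta)\big[(u(0,t)-u(1,t))-(\tilde u(0,t)-\tilde u(1,t))\big],
\]
which is the finite-state analogue of the continuum quantity $\int(u-\tilde u)\,d(m-\tilde m)$. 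The whole argument reduces to examining $P$ at the two endpoints and controlling its derivative.

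The three ingredients are as follows. First, at $t=0$ we have $\theta(0)=\tilde\theta(0)=\bar\theta$, so $P(0)=0$. Second, at $t=T$ the terminal conditions $u(i,T)=\psi(i,\theta(T))$ and $\tilde u(i,T)=\psi(i,\tilde\theta(T))$ give, with $x=\theta(T)$, $y=\tilde\theta(T)$,
\[
P(T)=(x-y)\big[\psi(0,x)-\psi(0,y)\big]+(y-x)\big[\psi(1,x)-\psi(1,y)\big]\geq 0
\]
by the monotonicity hypothesis \eqref{psimon}. Third — and this is the crux — I would differentiate $P$ and regroup. Using the product rule, the equations $-\dot u(i,\cdot)=h(\bar u-u,\theta,i)$ and $\dot\theta=(1-\theta)\alpha^*(u(0,\cdot)-u(1,\cdot),\theta,1)-\theta\,\alpha^*(u(1,\cdot)-u(0,\cdot),\theta,0)$ (together with their tilded versions), and setting $p=u(1,\cdot)-u(0,\cdot)$, $q=\tilde u(1,\cdot)-\tilde u(0,\cdot)$, the terms coming from $\dot u$ assemble into the $h$-differences of \eqref{keyineq} while the terms coming from $\dot\theta$ assemble into exactly the $\alpha^*$-weighted differences of \eqref{keyineq}. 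The upshot I expect is the identity
\[
\dot P(t)=\text{(left-hand side of \eqref{keyineq})}\leq-\gamma\,|\theta(t)-\tilde\theta(t)|^2 .
\]

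Integrating this over $[0,T]$ and using $P(0)=0$ and $P(T)\geq 0$ yields
\[
0\leq P(T)=\int_0^T\dot P(t)\,dt\leq-\gamma\int_0^T|\theta(t)-\tilde\theta(t)|^2\,dt\leq 0,
\]
forcing $\int_0^T|\theta-\tilde\theta|^2\,dt=0$ and hence $\theta\equiv\tilde\theta$. Once the distributions coincide, $u$ and $\tilde u$ solve the same terminal-value problem \eqref{ehj} with identical terminal data $\psi(i,\theta(T))$, and since the right-hand side $h(\bar u-u,\theta,i)$ is Lipschitz in $u$ on the a priori bounded range guaranteed by the maximum principle of Proposition \ref{maxpri}, standard ODE uniqueness (Gronwall) gives $u\equiv\tilde u$. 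The only genuinely delicate step is the bookkeeping in the third ingredient: one must verify that every term produced by $\dot P$ lands in the correct slot of \eqref{keyineq}, in particular that the sign conventions from $m_1=1-\theta$ and from the arguments $\pm p,\pm q$ of $\alpha^*$ are tracked consistently. The boundary analysis and the final ODE uniqueness are routine by comparison.
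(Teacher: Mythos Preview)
Your proposal is correct and follows essentially the same route as the paper: the functional $P(t)$ you introduce is exactly the paper's quantity $(\theta-\tilde\theta)(u-\tilde u)+((1-\theta)-(1-\tilde\theta))(\bar u-\bar{\tilde u})$, and your three ingredients---vanishing at $t=0$, nonnegativity at $t=T$ via \eqref{psimon}, and $\dot P$ matching the left-hand side of \eqref{keyineq} with $p=\bar u-u$, $q=\bar{\tilde u}-\tilde u$---are precisely the paper's steps. The only cosmetic difference is that you package the pairing as $\sum_i(u_i-\tilde u_i)(m_i-\tilde m_i)$, which makes the analogy with the Lasry--Lions continuum argument more transparent.
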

\begin{proof}
To establish uniqueness we will use monotonicity argument from \cite{ll1, ll2}.

Suppose $(\theta,u)$ and $(\tilde \theta, \tilde u)$ are solutions of (\ref{edist}) and (\ref{ehj}).
At the initial point $t=0$ we have that $(\theta -\tilde \theta)(u-\tilde u )=0$ and $((1-\theta) -(1- \tilde \theta))(\bar u-\bar{\tilde u} )=0$, where $u(t)=u(0,t)$ and $\bar u=u(1,t)$, and similarly for $\tilde u$.
Then
$$
(\theta -\tilde \theta)(u-\tilde u)_t=(\theta -\tilde \theta) [-h(\bar u-u,\theta ,0) +h(\bar{\tilde u}-\tilde u,\tilde\theta, 0)],
$$
and
$$ ((1-\theta) -(1- \tilde \theta))(\bar u-\bar{\tilde u} )_t= ((1-\theta) -(1-\tilde \theta)) [-h(u-\bar u,\theta,1)+h(\tilde u-\bar{\tilde u},\tilde\theta,1)].
$$
Furthermore,
\begin{align*}
(u-\tilde u) ( \theta -\tilde \theta)_t=&(u-\tilde u)[(1-\theta) \alpha^*(u-\bar u, \theta, 1)-\theta \alpha^*(\bar u-u, \theta, 0)
\\
&- (1-\tilde \theta) \alpha^*(\tilde u-\bar{\tilde u}, \tilde\theta, 1)+\tilde\theta \alpha^*(\bar{\tilde u}-\tilde u, \tilde\theta, 0)],
\end{align*}
and
\begin{align*}
 (\bar u-\bar{\tilde u}) ((1- \theta) -1+\tilde \theta)_t=&(\bar u-\bar{\tilde u})[\theta \alpha^*(\bar u-u), \theta, 0)-(1-\theta) \alpha^*(u-\bar u, \theta, 1)\\
 &-\tilde\theta \alpha^*(\bar{\tilde u} -\tilde u, \tilde\theta, 0) +(1-\tilde \theta) \alpha^*( \tilde u-\bar{\tilde u}, \tilde\theta, 1)].
\end{align*}
Hence,
\begin{align*}
&\qquad \frac{d}{dt}\Big((\theta -\tilde \theta)(u-\tilde u )+ ((1-\theta) -(1- \tilde \theta))(\bar u-\bar{\tilde u} )    \Big)=\\
=&\theta\Big(-h( \bar u-u,\theta ,0) +h(\bar{\tilde u}-\tilde u,\tilde\theta, 0) + [( \bar u-\bar{\tilde u})-(u-\tilde u)]\alpha^*(\bar u-u, \theta, 0)        \Big)\\
&+\tilde \theta   \Big( h( \bar u-u,\theta ,0) -h(\bar{\tilde u}-\tilde u,\tilde\theta, 0) + [-( \bar u-\bar{\tilde u})+(u-\tilde u)]\alpha^*(\bar{\tilde u} -\tilde u, \tilde\theta, 0)        \Big)\\
&
+(1- \theta )  \Big(-h(u-\bar u,\theta,1)+h({\tilde u}-\bar{\tilde u},\tilde\theta,1)  +[(u-\tilde { u})- ( \bar u-\bar{\tilde u})  ] \alpha^*(u-\bar u, \theta, 1) \Big)\\
&+
(1- \tilde\theta )  \Big(h(u-\bar u,\theta,1)-h({\tilde u}-\bar{\tilde u},\tilde\theta,1)  +[-(u-\tilde u)+ ( \bar u-\bar{\tilde u})  ] \alpha^*( \tilde u-\bar{\tilde u}, \tilde\theta, 1)       \Big).
\end{align*}
Then, by using \eqref{keyineq}, with $p=\bar u-u$ and $q=\bar{\tilde u}-{\tilde u}$, we obtain
\begin{equation}
\frac{d}{dt}\Big((\theta -\tilde \theta)(u-\tilde u )+ ((1-\theta) -(1- \tilde \theta))(\bar u-\bar{\tilde u} )    \Big)
\leq - \gamma |\theta - \tilde \theta|^2.
\end{equation}
Integrating the previous equation between $0$ and $T$, and using the terminal conditions, we have that
$$(\theta(T) -\tilde \theta(T))[\psi(0,\theta(T))- \psi(0,\tilde\theta(T)) ]+ (\tilde \theta(T)-\theta(T))  [\psi(1,\theta(T))-\tilde{\psi(1,\theta(T))} ]
\leq - \gamma \int _0^T|\theta(s) - \tilde \theta(s)|^2ds.$$
Hence by the monotonicity condition \eqref{psimon} we get
$$  0\leq -\gamma \int _0^T|\theta(s) - \tilde \theta(s)|^2ds,   $$
which implies that $\theta(s)=\tilde \theta(s)$ for all $s\in [0,T]$. Therefore,
we have the uniqueness for $\theta$. Then, once $\theta$
is known to be  unique, we obtain by a standard ODE argument that $u=\tilde u$.
\end{proof}

\subsection{Back to the example}\label{btte}

Just to illustrate, equations (\ref{MFG_eq}), in the special case of the example of section \ref{expl}, and supposing $g$ is a constant function,  becomes

\begin{equation}\nonumber
\frac{d\theta}{dt}= (1-\theta) (u-\bar u)^+_1 - \theta (u-\bar u)^+_0\,, \end{equation}
and 
\begin{equation}\nonumber
 - \frac{du}{dt}  =
 f(i,\theta)-\frac{((u-\bar u+g)^+)^2}{2}\,.
 \end{equation}
As we have already seen, provided the  condition \eqref{mpropf} holds and given the initial-terminal condition
$$    \theta(0)=\bar\theta,\,\,\,\, u(i,T)=\psi(i,\theta(T)) $$ the system above has a unique solution.

\section{Convergence}
\label{convsec}

This last section addresses the convergence as the number of players tends to infinity to the mean field model
derived in the previous section.

We start this section by discussing some preliminary estimates in \S\ref{pest}. Then, in
\S \ref{uest} we establish uniform estimates for $|u_{n+1}-u_n|$, which are essential
to prove our main result, theorem \ref{teoconv}, which is discussed in \S\ref{convsubsec}.
This theorem shows that the model derived in the previous section can be obtained as an appropriate limit
of the model with $N+1$ players discussed in section \ref{nplayer}.

\subsection{Preliminary results}
\label{pest}

Consider the system of ordinary differential equations
\begin{equation}
\label{unpert}
-\dot z_n=a_n(t) (z_{n+1}-z_n)+b_n(t) (z_{n-1}-z_n)+\mu_n(t)(\bar z_n-z_n),
\end{equation}
with $a_n(t), b_n(t), \mu_n(t)\geq 0$. Here $z_n=(z_n^0, z_n^1)$, $a_n=(a_n^0, a_n^1)$, etc.
We assume further that $a_N=0$ and $b_0=0$.

We write \eqref{unpert} in compact form as
\begin{equation}
\label{unpert2}
-\dot z(t)=M(t)z(t).
\end{equation}
The solution to this equation with terminal data $z(T)$ can be written as
\begin{equation}\label{unpert3}
z(t)=K(t,T)z(T),
\end{equation}
where $K(t, T)$ is the fundamental solution to \eqref{unpert2} with $K(T,T)=I$. Note that equations \eqref{unpert2} and \eqref{unpert3} imply
\begin{equation}\label{unpert4}\frac{d}{dt}K(t,T)=-M(t)K(t,T). \end{equation}

\begin{lemma}
\label{contprop}
For $t<T$ we have
\[
\|z(t)\|_{\infty}\leq \|z(T)\|_{\infty}.
\]
Furthermore, if $z(T)\leq 0$ then $z(t)\leq 0$.
\end{lemma}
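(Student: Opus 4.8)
The plan is to prove both assertions by a maximum principle argument in the spirit of Proposition \ref{maxpri}, exploiting that the operator $M(t)$ in \eqref{unpert} is, componentwise, a weighted combination of the \emph{differences} $z_{n+1}-z_n$, $z_{n-1}-z_n$ and $\bar z_n-z_n$ with nonnegative coefficients $a_n,b_n,\mu_n\ge 0$. This is exactly the structure that makes $M(t)$ behave like the generator of a (time-inhomogeneous) Markov chain on the index set $\{0,1\}\times\{0,\dots,N\}$, so the fundamental solution $K(t,T)$ should be an order-preserving contraction in the $\infty$-norm; the maximum principle is the elementary way to see this without invoking the probabilistic interpretation.

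First I would establish the upper bound $z_n^i(t)\le \max_{(i,n)} z_n^i(T)$ for $t\le T$. To rule out degenerate interior extrema I introduce the perturbation $\tilde z_n^i=z_n^i-\epsilon(T-t)$ with $\epsilon>0$; since the same quantity $\epsilon(T-t)$ is subtracted from every component, all differences appearing on the right-hand side of \eqref{unpert} are unchanged, and one checks in particular that $\bar{\tilde z}_n-\tilde z_n=\bar z_n-z_n$. Hence $-\dot{\tilde z}_n=-\epsilon+a_n(\tilde z_{n+1}-\tilde z_n)+b_n(\tilde z_{n-1}-\tilde z_n)+\mu_n(\bar{\tilde z}_n-\tilde z_n)$. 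Let $(i^*,n^*,t^*)$ be a maximizer of $\tilde z$ over the compact set $\{0,1\}\times\{0,\dots,N\}\times[0,T]$. At this point each of the three differences is $\le 0$, so their nonnegatively weighted sum is $\le 0$ and therefore $\dot{\tilde z}_{n^*}^{i^*}(t^*)\ge \epsilon>0$. A strictly positive derivative is incompatible both with an interior maximizer (where $\dot{\tilde z}=0$) and with a maximizer at the left endpoint $t^*=0$ (where one would need $\dot{\tilde z}\le 0$), so necessarily $t^*=T$. Letting $\epsilon\to 0^+$ gives $z_n^i(t)\le \max_{(i,n)} z_n^i(T)$.

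The lower bound $z_n^i(t)\ge \min_{(i,n)} z_n^i(T)$ follows symmetrically, now perturbing by $\tilde z_n^i=z_n^i+\epsilon(T-t)$ and examining a minimizer, at which the three differences are $\ge 0$. Combining the two bounds yields $|z_n^i(t)|\le \|z(T)\|_\infty$ for every $(i,n)$, which is precisely $\|z(t)\|_\infty\le\|z(T)\|_\infty$. The second assertion is then immediate from the upper bound alone: if $z(T)\le 0$ then $\max_{(i,n)} z_n^i(T)\le 0$, whence $z_n^i(t)\le 0$ for all $(i,n)$ and all $t\le T$.

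The only point requiring care is the boundary indices: at $n^*=0$ or $n^*=N$ the terms $z_{n^*-1}$ or $z_{n^*+1}$ fall outside the range, but the standing assumptions $b_0=0$ and $a_N=0$ make the corresponding coefficients vanish, so these terms never contribute and the maximizer inequalities $\tilde z_{n^*}^{i^*}\ge \tilde z_{n^*\pm1}^{i^*}$ are only invoked where the neighbour actually exists. Apart from this bookkeeping the argument is the linear analogue of the computation already carried out for the Hamilton--Jacobi system, the role of $h$ being played here by the linear difference terms, and I expect no essential difficulty beyond selecting the correct sign of the perturbation in each of the two bounds.
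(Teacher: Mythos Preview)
Your proposal is correct and follows essentially the same maximum-principle argument as the paper: the same perturbations $\tilde z=z\mp\epsilon(T-t)$ are used to force the extremum to the terminal time $T$, and the two one-sided bounds are then combined. Your treatment is in fact slightly more explicit than the paper's, since you spell out why $t^*\ne 0$ and why the boundary indices $n=0,N$ cause no trouble via $b_0=a_N=0$.
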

\begin{proof} Let $z$ be a solution of \eqref{unpert2}, and fix $\epsilon>0$. We define $\tilde z= z+\epsilon (t-T)$. Hence $\tilde z$ satisfies
$$
-\dot {\tilde z}_n=-\epsilon +a_n(t) (\tilde z_{n+1}-\tilde z_n)+b_n(t) (\tilde z_{n-1}-\tilde z_n)+\mu_n(t)(\bar {\tilde z}_n-\tilde z_n).
$$
Let $(i,n,t)$ be a maximum point of $\tilde z$ on $\{0,1\}\times\{0,1, \cdots, N\}\times [0,T]$.
We have $\tilde z_n(i,t)\geq \tilde z_{n-1}(i,t)$ and $z_n(i,t)\geq z_{n+1}(i,t)$, also  $\tilde z_n(i,t) \geq  \tilde z_n(1-i,t) = \bar {\tilde z}_n(i,t)$, this implies
$b_n(t) (\tilde z_{n-1}-\tilde z_n)\leq 0$ and $a_n(t) (\tilde z_{n+1}-\tilde z_n)\leq 0$ and $\mu_n(t)(\bar {\tilde z}_n- \tilde z_n)(i,t)\leq 0$.
Hence
$$-\frac{d\tilde z_n}{dt}(i,t)  \leq -\epsilon. $$
This shows that the maximum of $\tilde z$ is achieved at $T$. Therefore, for all $(j,m,t)$,
$$z_m(j,t)+\epsilon(t-T)=\tilde z_m(j,t)\leq \tilde z_n(i,T)=z_n(i,T)    $$
Letting $\epsilon\to 0$, we get
$$z_m(j,t)\leq \max_{n, i} z_n(i,T).
$$

From this equation we have the following conclusions:
\begin{enumerate}
\item
if $z(T)\leq 0$, we then have
$\;z_m(j,t)\leq 0\;$, for all $(j,m,t)$, and so $z(t)\leq 0$;
\item for all $(j,m,t)$,
$$z_m(j,t)\leq   \|z(T)\|_{\infty}.  $$
\end{enumerate}

Now we define $\tilde z= z+\epsilon (T-t)$. Hence $\tilde z$ satisfies
$$-\dot {\tilde z}_n=\epsilon +a_n(t) (\tilde z_{n+1}-\tilde z_n)+b_n(t) (\tilde z_{n-1}-\tilde z_n)+\mu_n(t)(\bar {\tilde z}_n-\tilde z_n).$$

Let $(i,n,t)$ be a minimum  point of $\tilde z$ on $\{0,1\}\times\{0,1, \cdots, N\}\times [0,T]$.
We have $\tilde z_n(i,t)\leq \tilde z_{n-1}(i,t)$ and $z_n(i,t)\leq z_{n+1}(i,t)$, also  $\tilde z_n(i,t) \leq  \tilde z_n(1-i,t) = \bar {\tilde z}_n(i,t)$. This implies
$b_n(t) (\tilde z_{n-1}-\tilde z_n)\geq 0$, and $a_n(t) (\tilde z_{n+1}-\tilde z_n)\geq 0$ and $\mu_n(t)(\bar {\tilde z}_n- \tilde z_n)(i,t)\geq 0$.
Therefore we have
$$-\frac{d\tilde z_n}{dt}(i,t)  \geq \epsilon.$$

This shows that the minimum of $\tilde z$ is also achieved at $T$, hence for all $(j,m,t)$
$$z_m(j,t)+\epsilon(T-t)=\tilde z_m(j,t)\geq \tilde z_n(i,T)=z_n(i,T).$$
Letting $\epsilon\to 0$, we get
$$z_m(j,t)\geq \min_{n,i} z_n(i,T).    $$
Hence
$$z_m(j,t)\geq  - \|z(T)\|_{\infty}.  $$
Therefore we have $\|z(t)\|_{\infty} \leq \|z(T)\|_{\infty}$.
\end{proof}

Note: let $z(t)=K(t,s)z(s)$ be a solution of \eqref{unpert2} with terminal data $z(s)=b$, then  lemma \ref{contprop} implies that $\|z(t)\|_{\infty} \leq \|z(s)\|_{\infty}$, and therefore
 \begin{equation}\label{Kineq}\|K(t,s)b\|_{\infty} \leq \|b\|_{\infty}, \forall \,b .\end{equation}

From the previous lemma we also conclude
\begin{lemma}
If $p_1\leq p_2$, and $t\leq s$, then we have
\[
K(t,s)p_1\leq K(t, s)p_2.
\]
\end{lemma}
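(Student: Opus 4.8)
The plan is to exploit the linearity of the fundamental solution $K(t,s)$ together with the sign-preservation property already established in Lemma \ref{contprop}. The starting observation is that, since $K(t,s)$ is a linear operator, one has
\[
K(t,s)p_2 - K(t,s)p_1 = K(t,s)(p_2-p_1),
\]
and the hypothesis $p_1\leq p_2$ means that $b:=p_2-p_1\geq 0$. Thus it suffices to show that $K(t,s)b\geq 0$ whenever $b\geq 0$ and $t\leq s$.

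To see this, I would set $z(t)=K(t,s)b$, which by construction solves the system \eqref{unpert2} with terminal data $z(s)=b\geq 0$. Applying the second assertion of Lemma \ref{contprop}---in the form noted immediately after its proof, where the terminal time is taken to be a general $s$ rather than the fixed horizon $T$---to the solution $-z$, whose terminal data $-b\leq 0$ is nonpositive, yields $-z(t)\leq 0$ for all $t\leq s$, that is, $z(t)\geq 0$. Combining this with the displayed identity gives $K(t,s)p_1\leq K(t,s)p_2$, as desired.

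There is essentially no hard step here: the argument is a one-line consequence of linearity and the monotone comparison principle of the preceding lemma. The only point requiring care is the legitimacy of invoking Lemma \ref{contprop} with the terminal condition prescribed at $s$ instead of at $T$, but this is precisely the content of the remark following that lemma, where both the comparison estimate \eqref{Kineq} and the sign-preservation were extended to arbitrary terminal times through the fundamental-solution representation \eqref{unpert3}.
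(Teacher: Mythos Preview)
Your argument is correct and is essentially identical to the paper's proof: both use linearity of $K(t,s)$ together with the sign-preservation part of Lemma~\ref{contprop}, the only cosmetic difference being that the paper applies the lemma directly to $p_1-p_2\leq 0$ whereas you apply it to $-(p_2-p_1)$. Your explicit remark about invoking Lemma~\ref{contprop} with terminal time $s$ rather than $T$ is appropriate and matches the note following that lemma in the paper.
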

\begin{proof}
Observe that if $p_1-p_2\leq 0$ then $K(t, s)(p_1-p_2)\leq 0$, by lemma \ref{contprop}.
\end{proof}

We note now that if $t\leq s\leq T$ we have $K(t,s)K(s,T)=K(t,T)$, which implies
\[
\frac{d}{ds}\bigg( K(t,s)K(s, T)\bigg)=0.
\]
Hence, using equation \eqref{unpert4} we get
\[
-K(t,s)M(s)K(s,T)+\left(\frac{d}{ds} K(t,s)\right)  K(s,T)=0,
\]
and therefore, by taking $T=s$ we conclude that
\begin{equation}
\label{uidt}
 \frac{d}{ds} K(t,s)=K(t, s)M(s).
\end{equation}

We now prove the main technical lemma:
\begin{lemma}\label{413}
Suppose $z$ is a solution to
\begin{equation}
\label{difeneq}
-\dot z(s)\leq M(s) z(s) +f(z(s)).
\end{equation}
Then
\[
z(t)\leq \|z(T)\|_{\infty}+\int_t^T \|f(z(s))\|_{\infty}ds.
\]
\end{lemma}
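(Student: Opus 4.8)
The plan is to use a variation-of-parameters (Duhamel) representation together with the order-preserving property of the fundamental solution $K(t,s)$. Specifically, I would introduce the auxiliary function $w(s)=K(t,s)z(s)$ on $[t,T]$ and aim first to prove the componentwise bound
\begin{equation}\nonumber
z(t)\leq K(t,T)z(T)+\int_t^T K(t,s)f(z(s))\,ds,
\end{equation}
from which the stated estimate follows by the contraction inequality \eqref{Kineq}.

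To derive this representation I would differentiate $w$ by the product rule and the identity \eqref{uidt}, obtaining $\frac{d}{ds}w(s)=K(t,s)M(s)z(s)+K(t,s)\dot z(s)$. The differential inequality \eqref{difeneq} gives $\dot z(s)\geq -M(s)z(s)-f(z(s))$ componentwise; since $t\leq s$, the operator $K(t,s)$ is order-preserving by the comparison lemma established above, so applying it to this inequality yields $K(t,s)\dot z(s)\geq -K(t,s)M(s)z(s)-K(t,s)f(z(s))$. Substituting, the two terms involving $M(s)z(s)$ cancel exactly and we get $\frac{d}{ds}w(s)\geq -K(t,s)f(z(s))$. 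Integrating from $t$ to $T$ and using $w(t)=K(t,t)z(t)=z(t)$ together with $w(T)=K(t,T)z(T)$ produces the displayed bound.

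Finally I would pass to the sup-norm. Reading the vector inequality componentwise, each entry of $K(t,T)z(T)$ is at most $\|K(t,T)z(T)\|_{\infty}\leq\|z(T)\|_{\infty}$ and each entry of $K(t,s)f(z(s))$ is at most $\|K(t,s)f(z(s))\|_{\infty}\leq\|f(z(s))\|_{\infty}$, both by \eqref{Kineq}. This yields $z(t)\leq\|z(T)\|_{\infty}+\int_t^T\|f(z(s))\|_{\infty}\,ds$, as claimed.

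The main obstacle — essentially the only delicate point — is getting the direction of the inequality right when passing $\dot z(s)\geq -M(s)z(s)-f(z(s))$ through $K(t,s)$. This step is legitimate precisely because $K(t,s)$ preserves the componentwise order for $t\leq s$ (equivalently, it is a nonnegative operator, which is Lemma \ref{contprop} applied to the terminal datum $p_1-p_2\leq 0$), and the cancellation of the $M(s)z(s)$ terms relies on the exact derivative formula \eqref{uidt} rather than on \eqref{unpert4}. Everything else is a routine integration and a componentwise application of the contraction estimate.
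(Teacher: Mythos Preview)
Your proof is correct and is essentially the same as the paper's: both multiply the differential inequality by the order-preserving fundamental solution $K(t,s)$, use the derivative identity \eqref{uidt} to cancel the $M(s)z(s)$ terms, integrate from $t$ to $T$, and then apply the contraction estimate \eqref{Kineq}. The only cosmetic difference is that you name the auxiliary function $w(s)=K(t,s)z(s)$ and work with $\frac{d}{ds}w\geq -K(t,s)f(z(s))$, whereas the paper writes the equivalent inequality $-\frac{d}{ds}\big(K(t,s)z(s)\big)\leq K(t,s)f(z(s))$ directly.
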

\begin{proof}
Multiplying \eqref{difeneq} by the order preserving operator $K(t,s)$, we have
$$ -K(t,s)\dot z(s)\leq K(t,s)M(s) z(s) +K(t,s)f(z(s))$$
using the identity
\[
\frac{d}{ds} K(t,s)z(s)=K(t,s) \dot z(s)+ K(t, s) M(s) z(s),
\]
which follows from \eqref{uidt},
we get
\[
-\frac{d}{ds} \Big(K(t,s)z(s)\Big) + K(t, s) M(s) z(s)\leq K(t,s)M(s) z(s)+ K(t, s)f(z(s)).
\]
%
%
%
%
Thus, integrating between $t$ and $T$, we have
\[
z(t)-K(t, T)z(T)\leq \int_t^T K(t, s)f(z(s)) ds.
\]
So, using equation \eqref{Kineq},
\[
z(t)\leq  \|z(T)\|_{\infty}+\int_t^T \|f(z(s))\|_{\infty}ds.
\]
\end{proof}

%

\subsection{Uniform estimates}\label{uest}

In this section we prove "gradient estimates" for the $N+1$ player game, that is,
we assume that the difference $u_{n+1}-u_n$ is of the order $\frac 1 N$ at time $T$
and show that it remains so for $0\leq t\leq T$, as long as $T$ is sufficiently small.

We start by establishing an auxiliary result:
\begin{lemma}\label{tfc}
Suppose $v=v(s)$ is a solution to the ODE with terminal condition
\begin{equation}\label{tfc_p}
\begin{cases}
-\frac{dv}{ds}= Cv+C N v^2 + \frac{C}{N}\\
v(T)\leq\frac{C}{N},
\end{cases}
\end{equation}
where $N$ is a natural number, and $C>0$. Then, there exists $T^{\star}>0$, which does not depend on $N$, such that
$T \leq T^{\star}$ implies
$v(s) \leq \frac{2C}{N}$ for all $0\leq s \leq T$.
\end{lemma}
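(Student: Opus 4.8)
The plan is to reverse time, turning the terminal-value problem into a forward scalar Riccati initial-value problem, and then to run a continuity (bootstrap) argument whose breakdown time depends only on $C$.

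First I would set $\tau = T-s$ and $w(\tau)=v(T-\tau)$, so that $\frac{dw}{d\tau}=-\frac{dv}{ds}$ and the system \eqref{tfc_p} becomes the forward autonomous problem $\frac{dw}{d\tau}=Cw+CNw^2+\frac{C}{N}$ with $w(0)=v(T)\leq \frac{C}{N}$, posed on $\tau\in[0,T]$. Since this is an autonomous scalar ODE, solutions cannot cross, so by the comparison principle for scalar ODEs it suffices to bound the solution issuing from the worst admissible datum $w(0)=\frac{C}{N}$: any solution with $w(0)\leq \frac{C}{N}$ stays below it.

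The key observation is that, as long as $0\leq w\leq \frac{2C}{N}$, each term of the right-hand side is controlled: $Cw\leq \frac{2C^2}{N}$, $CNw^2\leq \frac{4C^3}{N}$, and $\frac{C}{N}=\frac{C}{N}$, whence $\frac{dw}{d\tau}\leq \frac{D}{N}$ with $D:=C+2C^2+4C^3$ depending only on $C$. I would then introduce $\tau^\star=\sup\{\tau:\, w\leq \tfrac{2C}{N}\text{ on }[0,\tau]\}$, which is positive because $w(0)\leq\frac{C}{N}<\frac{2C}{N}$. On $[0,\tau^\star]$ the derivative bound integrates to $w(\tau)\leq \frac{C}{N}+\frac{D}{N}\tau$. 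Choosing $T^\star:=\frac{C}{2D}$, which does not depend on $N$, I get for every $\tau\leq\min\{T,T^\star\}$ the strict bound $w(\tau)\leq \frac{C}{N}+\frac{D}{N}T^\star=\frac{3C}{2N}<\frac{2C}{N}$. This strict inequality prevents $w$ from ever reaching the threshold $\frac{2C}{N}$, so $\tau^\star$ cannot be smaller than $T$ when $T\leq T^\star$; translating back, $v(s)=w(T-s)\leq \frac{2C}{N}$ for all $0\leq s\leq T$.

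The delicate point is making the continuity argument airtight at the threshold: the naive choice $T^\star=\frac{C}{D}$ only yields $w\leq \frac{2C}{N}$ with equality possible at the endpoint, which is not enough to contradict the definition of $\tau^\star$, so I deliberately take $T^\star$ strictly smaller to force a strict inequality. The only other thing to verify is the sign regime of $w$: the right-hand side $Cw+CNw^2+\frac{C}{N}$ attains its minimum $\frac{3C}{4N}>0$ at $w=-\frac{1}{2N}$, hence it is everywhere positive, so $w$ is increasing and, starting from a value in $[0,\frac{C}{N}]$, remains in $[0,\frac{2C}{N}]$ throughout the relevant interval, which legitimizes the term-by-term bound used above.
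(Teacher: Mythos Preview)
Your proof is correct and follows essentially the same approach as the paper: both arguments bound the right-hand side above by $D/N$ with $D=C+2C^2+4C^3$ on the regime $v\in[C/N,2C/N]$, and then compute the minimal time needed to traverse this interval. The paper packages this by inverting the ODE and integrating $\int_{C/N}^{2C/N}\frac{dv}{Cv+CNv^2+C/N}\geq \frac{C/N}{D/N}=\frac{1}{1+2C+4C^2}=:T^\star$, while you run the equivalent forward bootstrap after a time reversal (your $T^\star=C/(2D)$ is half of theirs because you built in an extra safety margin), but the substance is the same.
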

\begin{proof}
Note that (\ref{tfc_p}) implies that
  $v$ is a monotone decreasing function of $s$ and
 is equivalent to
$$\begin{cases}
\frac{ds}{d v}= \frac{-1}{Cv+C N v^2 + \frac{C}{N}}\\
s(\frac{C}{N})\leq T.
\end{cases}
$$
This implies by direct integration that
$$ s\left(\frac{2C}{N}\right)
\leq T - \int_{\frac{C}{N}}^{\frac{2C}{N}} \frac{dv}{Cv+C N v^2 + \frac{C}{N}}\,. $$ Now
$$\int_{\frac{C}{N}}^{\frac{2C}{N}} \frac{dv}{Cv+C N v^2 + \frac{C}{N}} \geq
\int_{\frac{C}{N}}^{\frac{2C}{N}} \frac{N}{2C^2+4C^3+C}dv = \frac{1}{2C+4C^2+1}. $$
Therefore if we define $T^{\star}=\frac{1}{2C+4C^2+1} $, we  have that $s\left(\frac{2C}N\right) \leq 0$ if $T \leq T^{\star}$.
Hence this implies $v(0) \leq \frac{2C}N$, which yields the desired result when we take into account that $v$ is a decreasing function of $s$.
\end{proof}

\begin{proposition}
\label{lipbounds}
Suppose that
\begin{equation}\label{lbh}
\|u_{n+1}(T)-u_n(T)\|_{\infty}\leq \frac C N.
\end{equation}
for $C>0$.
Let $u$ be a solution of \eqref{eqhj}. Then there exists $T^{\star}>0$ such that, for   $0<T< T^{\star}$ ,we have
\[
\|u_{n+1}(t)-u_n(t)\|_{\infty}\leq \frac {2C} N,
\] for all $0\leq t\leq T$.
\end{proposition}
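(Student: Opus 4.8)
The plan is to derive a differential inequality for the quantity $w_n(t) := \|u_{n+1}(t)-u_n(t)\|_\infty$ (or rather for $u_{n+1}-u_n$ considered as a vector in $n$ and $i$) and then invoke Lemma \ref{tfc} to close the estimate. First I would write the equation \eqref{eqhj} at indices $n+1$ and $n$ and subtract, obtaining an ODE for the difference $d_n := u_{n+1}-u_n$. The right-hand side will contain three kinds of terms: the differenced transport terms $\gamma^\pm_n(u_{n\pm 1}-u_n)$, the differenced Hamiltonian terms $h(\bar u_n - u_n, \tfrac nN, i)$, and crucially the differences of the coefficients $\gamma^\pm_{n+1}-\gamma^\pm_n$ themselves. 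I would group these so that the pure transport part has the structure of the linear operator $M(s)$ from \eqref{unpert}, leaving a source term $f$ that I can bound.

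The key estimates are the two bounds recorded just after Theorem \ref{nash_ex_uniq}, namely $|\gamma^\pm_n|\le CN$ and $|\gamma^\pm_{n+1}-\gamma^\pm_n|\le C + CN\|u_{n+1}-u_n\|_\infty$. The second of these is what produces the nonlinear term: when I expand the difference of the transport terms, the factors $(\gamma^\pm_{n+1}-\gamma^\pm_n)$ multiply differences of $u$, so after taking $\|\cdot\|_\infty$ I expect contributions of the form $C\,w + CN\,w^2$, where the $w^2$ comes from a coefficient difference (bounded by $CN w$) multiplied against a $u$-difference (bounded by $w$). The differenced Hamiltonian terms contribute a Lipschitz-in-$p$ piece bounding like $C w$ (using that $h$ is Lipschitz/$\alpha^*$ is bounded) plus a Lipschitz-in-$\theta$ piece of size $\tfrac CN$ (since $\tfrac{n+1}N - \tfrac nN = \tfrac1N$ and $h$ is Lipschitz in $\theta$). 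Collecting everything, I would show that $w(t):=\|u_{n+1}(t)-u_n(t)\|_\infty$ satisfies a differential inequality of exactly the form
\[
-\dot w \le Cw + CN w^2 + \frac CN,
\qquad w(T)\le \frac CN,
\]
after suitably enlarging the constant $C$. To pass rigorously from the vector difference $d_n$ to the scalar $w$ I would apply Lemma \ref{413}: writing the $d_n$ equation as $-\dot d \le M(s) d + f(d(s))$ with $M$ the order-preserving transport operator (whose fundamental solution $K$ satisfies \eqref{Kineq}), Lemma \ref{413} yields $d_n(t)\le \|d(T)\|_\infty + \int_t^T\|f(d(s))\|_\infty ds$, and symmetrically for $-d_n$, so that $w(t)\le \tfrac CN + \int_t^T (Cw + CNw^2 + \tfrac CN)\,ds$; differentiating recovers the stated inequality.

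Once this scalar differential inequality is in place, Lemma \ref{tfc} applies verbatim: it gives a threshold $T^\star = \tfrac{1}{2C+4C^2+1}>0$, independent of $N$, such that $T\le T^\star$ forces $w(s)\le \tfrac{2C}N$ for all $0\le s\le T$, which is exactly the claim. The main obstacle I anticipate is the bookkeeping in the first step: separating the differenced transport terms cleanly into an order-preserving linear part $M(s)d$ (so that Lemmas \ref{contprop} and \ref{413} are applicable, which requires the nonnegativity of the effective coefficients and the structural form $a_n(z_{n+1}-z_n)+b_n(z_{n-1}-z_n)+\mu_n(\bar z_n - z_n)$) versus the source term $f$, while making sure the $CNw^2$ term really lands in $f$ and not in $M$. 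Handling the boundary indices $n=0$ and $n=N$, where one transport coefficient vanishes, needs a quick separate check but causes no trouble since $\gamma^-_0=\gamma^+_N=0$.
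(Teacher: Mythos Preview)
Your proposal is correct and follows essentially the same route as the paper: subtract \eqref{eqhj} at $n+1$ and $n$, split the result into an order-preserving linear part $M(s)z$ plus a source bounded by $Cw+CNw^2+C/N$, apply Lemma~\ref{413} (and its mirror for $-z_n$) to get the integral inequality, and close with Lemma~\ref{tfc}. The only places where the paper is a bit more specific than your sketch are (i) it uses the symmetric/antisymmetric split $\tfrac{\gamma^\pm_{n+1}+\gamma^\pm_n}{2}(z_{n\pm1}-z_n)+\tfrac{\gamma^\pm_{n+1}-\gamma^\pm_n}{2}(z_{n\pm1}+z_n)$ to isolate nonnegative transport coefficients, (ii) it exploits concavity of $h$ (rather than just Lipschitz in $p$) to obtain the $\mu_n(\bar z_n-z_n)$ term with $\mu_n=h_p\ge 0$, and (iii) it passes from the integral inequality to Lemma~\ref{tfc} via a comparison function $\eta$ rather than by ``differentiating'' $w$, which need not be differentiable.
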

\begin{proof}

Let
\[
z_n=u_{n+1}-u_n.
\]
Note that, as usual, $z_n=(z_n^0, z_n^1)$.
We have
\[
-\dot z_n=\gamma^+_{n+1}z_{n+1}-\gamma^+_n z_n-\gamma^-_{n+1}z_{n}+\gamma^-_{n}z_{n-1}+
h\left(\frac {n+1} N, i, \bar u_{n+1}-u_{n+1}\right)-h\left(\frac n N, i, \bar u_{n}-u_{n}\right).
\]
We can write
\begin{align*}
\gamma^+_{n+1}z_{n+1}-&\gamma^+_n z_n-\gamma^-_{n+1}z_{n}+\gamma^-_{n}z_{n-1}\\
=&\frac{\gamma^+_{n+1}+\gamma^+_n}{2} (z_{n+1}-z_n)+\frac{\gamma^+_{n+1}-\gamma^+_n}{2} (z_{n+1}+z_n)\\
&+\frac{\gamma^-_{n+1}+\gamma^-_n}{2} (z_{n-1}-z_n)+\frac{\gamma^-_{n}-\gamma^-_{n+1}}{2} (z_{n-1}+z_n).
\end{align*}
We must now observe that
\[
\left|\frac{\gamma^-_{n}-\gamma^-_{n+1}}{2}\right|\leq C+CN\|z\|_{\infty},
\]
as well as
\[
\left|\frac{\gamma^+_{n+1}-\gamma^+_n}{2}\right|\leq C+CN\|z\|_{\infty}.
\]
Furthermore, we have
\begin{align*}
&h\left(\frac {n+1} N, i, \bar u_{n+1}-u_{n+1}\right)-h\left(\frac n N, i, \bar u_{n}-u_{n}\right)\\
&=h\left(\frac {n+1} N, i, \bar u_{n+1}-u_{n+1}\right)-h\left(\frac {n} N, i, \bar u_{n+1}-u_{n+1}\right)\\
&+h\left(\frac {n} N, i, \bar u_{n+1}-u_{n+1}\right)-h\left(\frac n N, i, \bar u_{n}-u_{n}\right)\\
&\leq \frac{C} N+h_p\left(\frac n N, i, \bar u_{n}-u_{n}\right) \left(( \bar u_{n+1}-u_{n+1})-(\bar u_{n}-u_{n})\right)\\
&\leq \frac{C}N+\mu_n (\bar z_n-z_n),
\end{align*}
where $\mu_n=h_p\left(\frac n N, i, \bar u_{n}-u_{n}\right)\geq 0$.

At this point we are in position to apply lemma \ref{413} from the previous section. We obtain
$$z_n(t)=(u_{n+1}-u_{n})(t)\leq \|z(T)\|_{\infty} +\int_t^T C\|z(s)\|_{\infty}+C\|z(s)\|_{\infty}^2+\frac {C} N  \;ds\;. $$
We can also use the same
argument applied to
\[
\tilde z_n=u_{n}-u_{n+1}\,.
\]
Finally, if we set $w=\|u_{n+1}-u_n\|_{\infty}$ we conclude that
\[
w(t)\leq w(T)+\int_t^T Cw(s)+CN w(s)^2+\frac{C}{N} ds.
\]
Now we define
$$\eta(t) = w(T)+\int_t^T Cw(s)+CN w(s)^2+\frac{C}{N} ds.$$
We have that
\begin{equation}\label{blabla}
w(t)\leq \eta(t),
\end{equation}
 and also that $$\frac{d \eta}{dt}(t) = - g(w(t)),$$ where $g$ is the nondecreasing function $g(w)= Cw+CN w^2+\frac{C}{N}$. Thus
$$
\begin{cases}
\frac{d \eta}{dt}(t) \geq - g(\eta(t)) \\
\eta(T)=w(T).
\end{cases}
$$
A standard argument from the basic theory of differential inequalities can now be used to prove that
$\eta(t) \leq v(t)$ for $0 \leq t \leq T$ if $v(t)$
is the solution of
$$
\begin{cases}
\frac{d v}{dt}(t) = - g(v(t)) \\
v(T)=w(T).
\end{cases}
$$
This last result can be combined with lemma \ref{tfc}, the hypothesis $w(T)\leq \frac{C}N$ 
 and the inequality (\ref{blabla}), to
prove that $w(t) \leq \frac{2C}N$ for all $0 \leq t \leq T$, which ends the proof of the proposition.
\end{proof}

\subsection{Convergence}
\label{convsubsec}

In this section we  prove theorem \ref{teoconv}, which implies the convergence of both distribution and value function of the $N+1$-player game to the mean field game, for small times.

We start by assuming that  at the initial time the $N$ players distinct from the reference player distribute themselves between states $0$ and $1$ according
to a Bernoulli distribution with probability $\bar \theta$ of being in state $0$.

Let
\begin{equation} \label{VNQN}
\begin{cases}
V_N(t) \equiv \mathbb{E}\left[ \left( \frac{n(t)}{N} - \theta(t) \right)^2  \right]\,, \\  \\
 W_N(t) \equiv \mathbb{E}\left[ \left( u(0,t) - u_{n(t)}(0,t) \right)^2  \right]\,,  \\ \\
 \bar W_N(t) \equiv \mathbb{E}\left[ \left( u(1,t) - u_{n(t)}(1,t) \right)^2  \right]\,, \\  \\
 Q_N(t) \equiv W_N(t)+\bar W_N(t)\,,
\end{cases}
\end{equation}
where $\theta(t)$ is the solution of  (\ref{eq_pi}),  $0 \leq n(t)\leq N$ is the number of players
(distinct from the reference player)
which are in state $0$ at time $t$, and  $u=u(i,t)$ and $u_n=u_n(i,t)$ are respectively the solution of the HJ equation and terminal conditions for the MFG \eqref{MFG_eq} and $N+1$ player game \eqref{eqhj}.

We have $$V_N(0)= \mbox{Var}\left[ \frac{n(0)}{N}\right] = \frac{\bar \theta(1-\bar \theta)}{N}\,,$$ because $n(0)$ is the sum of $N$ iid rv with Bernoulli distribution.

In this section $\alpha = \alpha(i,t)$ is the optimal control  for the MFG, while $\alpha^N=\alpha^N(i,n,t)$ is the optimal control for the $N+1$ player game.
We know from sections \ref{hypot},  \ref{hjode} and \ref{tcpmfm}  that
$\alpha^N= \alpha^*\left(\bar u_n- u_n,\frac{n}N,i\right)$
and $\alpha= \alpha^*(\bar u- u, \theta ,i)$.

\begin{lemma}\label{conv1}
There exists $C_1>0$ such that
$$V_N(t) \leq \int_0^t C_1 (V_N(s)+ Q_N(s)) ds + \frac{C_1}N.$$
\end{lemma}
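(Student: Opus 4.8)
The goal is a Gronwall-ready estimate for $V_N(t)=\mathbb{E}[(n(t)/N-\theta(t))^2]$. The natural approach is to differentiate $V_N$ in $t$ using the Dynkin formula \eqref{Dynkin} applied to the function $\varphi(i,n,t)=(n/N-\theta(t))^2$, combined with the ODE \eqref{eq_pi} governing $\theta$. First I would write $\frac{d}{dt}V_N(t)$ as the expectation of the generator $A^{\beta,\alpha}$ acting on $(n/N-\theta)^2$ plus the explicit $t$-derivative coming from $\theta'(t)$. The generator contributes the jump terms involving $\gamma^+_\beta$ and $\gamma^-_\beta$ from \eqref{gamma}, where here the ambient control is the Nash control $\alpha^N=\alpha^*(\bar u_n-u_n,\frac nN,i)$; applied to a quadratic in $n$ these jump terms produce a drift part proportional to $(\gamma^+-\gamma^-)/N$ times $(n/N-\theta)$ and a quadratic ``diffusion'' part of size $(\gamma^++\gamma^-)/N^2$.

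The key computation is to pair the drift produced by the jumps against the $-\theta'(t)$ term. Since $\theta$ solves \eqref{eq_pi} with the MFG control $\alpha$ and the empirical process is driven by $\alpha^N$, the mismatch between $\frac1N(\gamma^+_\beta-\gamma^-_\beta)$ and $(1-\theta)\alpha_1-\theta\alpha_0$ must be controlled. I would split this difference into two pieces: one measuring $\alpha^*$ evaluated at $\frac nN$ versus at $\theta$ (controlled by the Lipschitz property of $\alpha^*$ in $\theta$, giving $|n/N-\theta|$, hence a $\sqrt{V_N}$ contribution), and one measuring $\alpha^*$ evaluated at $\bar u_n-u_n$ versus at $\bar u-u$ (controlled by the Lipschitz property of $\alpha^*$ in $p$, giving terms like $|u(i,t)-u_n(i,t)|$, which feed into $Q_N$ through the definitions \eqref{VNQN}). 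After multiplying out against the factor $(n/N-\theta)$, a Young/Cauchy--Schwarz step converts cross terms $|n/N-\theta|\cdot(\text{error})$ into $V_N+Q_N$ up to constants.

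The remaining quadratic jump term of order $1/N^2$, once summed against the rates $\gamma^\pm=O(N)$, yields a contribution of order $1/N$; taking expectation and using $V_N(0)=\bar\theta(1-\bar\theta)/N$ gives the stated $\frac{C_1}N$ term after integrating from $0$ to $t$. The main obstacle I anticipate is the careful bookkeeping in the drift comparison: one must correctly account for the ``seen by a player in state $i$'' shifts $n+1-i$ and $n-i$ appearing in \eqref{gamma}, so that the empirical drift genuinely matches the mean field vector field $(1-\theta)\alpha_1-\theta\alpha_0$ up to the $O(1/N)$ indexing discrepancy and the two Lipschitz error terms. Once the differential inequality $\frac{d}{dt}V_N\geq -C_1(V_N+Q_N)-\frac{C_1}N$ (read backward in the integration, since the driving direction is toward decreasing $t$, but $V_N$ itself is an initial-value quantity) is established, integrating and absorbing constants yields the claim. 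I would take care that all the Lipschitz constants invoked are uniform in $N$, which they are because $\alpha^*$ is uniformly Lipschitz in both arguments by the Proposition in \S\ref{hypot}.
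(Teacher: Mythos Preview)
Your plan is essentially the paper's own argument: apply Dynkin's formula to $\varphi(i,n,s)=(n/N-\theta(s))^2$, expand the generator into drift plus quadratic jump, compare the drift against $\theta'$ using the Lipschitz continuity of $\alpha^*$ in both its $p$ and $\theta$ arguments, and absorb the quadratic jump term plus $V_N(0)$ into the $C_1/N$ error. Two small points to tighten. First, when you unwind the index shifts $n+1-i$, $n-i$ coming from \eqref{gamma}, the $p$-argument of $\alpha^*$ moves from $\bar u_{n\pm i}-u_{n\pm i}$ to $\bar u_n-u_n$; this costs $|u_{n\pm1}-u_n|$, and you must invoke Proposition~\ref{lipbounds} to make that $O(1/N)$ --- your phrase ``$O(1/N)$ indexing discrepancy'' hides this nontrivial ingredient. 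Second, the differential inequality should read $\frac{d}{dt}V_N\leq C_1(V_N+Q_N)+\frac{C_1}{N}$ forward in time (integrate from $0$ to $t$, not backward); the sign you wrote would give nothing.
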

\begin{proof}
Using Dynkin's Formula (\ref{Dynkin}) with $\varphi(i,n,s)=\left(\frac{n(s)}N-\theta(s)\right)^2$, we have
$$ V_N(t) -  \frac{\theta_0(1-\theta_0)}{N} =
 \mathbb{E} \int_0^t \omega_N(s) + \varsigma_N(s)
ds$$ where
$$
\omega_N(s) = (N-n)\alpha^N_1 \left[ \left( \frac{n+1}{N}-\theta \right)^2 - \left( \frac{n}{N}-\theta \right)^2 \right]+
n \alpha^N_0 \left[ \left( \frac{n-1}{N}-\theta \right)^2
- \left( \frac{n}{N}-\theta \right)^2 \right]\,,$$
$$
\alpha_0^N=\alpha^*\left(\bar u_{n-i}-u_{n-i},\frac{n-i}N,0\right)\,,
$$
$$
\alpha_1^N=\alpha^*\left(\bar u_{n+1-i}-u_{n+1-i},\frac{n+1-i}N,1\right)\,,
$$
$$u_n=u_N(i,n,t)\,,$$
and $$
 \varsigma_N(s) = \frac{d \varphi}{dt}(i,n,r)=
 - 2  \left( \frac{n}{N}-\theta \right)
\left((1-\theta) \alpha_1 - \theta \alpha_0 \right).
$$
We have
\begin{align*}
\omega_N(s) =&  \left(1-\frac{n}{N}\right)\alpha^N_1 \left( \frac{2n+1}{N}-2\theta
 \right)-
\frac{n}{N} \alpha^N_0 \left(  \frac{2n-1}{N}-2 \theta
 \right)\\
= &
 2\alpha^N_1 \left(1-\frac{n}{N}\right) \left( \frac{n}{N}-\theta
 \right)- 2 \alpha^N_0
\frac{n}{N}  \left(  \frac{n}{N}- \theta  \right) + \tau_N(s),
\end{align*}
where $ \tau_N(s) = \frac{\alpha^N_1}{N} +\frac{n}{N^2}(\alpha^N_0-\alpha^N_1)\,.$
Now
\begin{align*}
\omega_N(s) +& \varsigma_N(s)
= 2 \left( \frac{n}{N}-\theta \right)
\left[ \alpha^N_1 \left(1-\frac{n}{N}\right) - \alpha^N_0 \frac{n}{N}
-\left((1-\theta) \alpha_1 - \theta \alpha_0 \right) \right] + \tau_N(s)
\\
=&  2 \left( \frac{n}{N}-\theta \right)
\left[ (\alpha^N_1+\alpha^N_0)  \left(-\frac{n}{N}\right) +(\alpha_1+\alpha_0)\theta + (\alpha^N_1-\alpha_1)
\right] + \tau_N(s)\\
= & 2 \left( \frac{n}{N}-\theta \right)
\left[ (\alpha^N_1+\alpha^N_0)  \left(\theta-\frac{n}{N}\right) +(\alpha_1-\alpha^N_1+\alpha_0-\alpha^N_0)\theta + (\alpha^N_1-\alpha_1)
\right] + \tau_N(s)
\\=& - 2  (\alpha^N_0+\alpha^N_1) \left( \frac{n}{N} - \theta \right)^2 + 2 \left( \frac{n}{N}-\theta \right) \left((\alpha_1-\alpha^N_1+\alpha_0-\alpha^N_0)\theta + (\alpha^N_1-\alpha_1)\right)
+  \tau_N(s).
\end{align*}

Then
\begin{align*}
V_N(t) -  \frac{\theta_0(1-\theta_0)}{N} =&
 - 2 \mathbb{E}\left[ \int_0^t  (\alpha^N_0+\alpha^N_1) \left( \frac{n}{N} - \theta \right)^2 ds \right] \\
 &+
 \mathbb{E} \left[ \int_0^t 2 \left( \frac{n}{N}-\theta \right) \left( (\alpha_1-\alpha^N_1+\alpha_0-\alpha^N_0)\theta + (\alpha^N_1-\alpha_1) \right) ds
 \right]\\
 &+
  \mathbb{E} \left[ \int_0^t \tau_N(s) ds \right].
\end{align*}

Now we see that
\begin{align*}|\alpha_0-\alpha^N_0|&=\left|\alpha^*(\bar u-u,\theta,0)-
\alpha^*\left(\bar u_{n-i}-u_{n-i},\frac{n-i}N,0\right)\right| \\
&< K\bigg( \left|\theta-\frac{n-i}N\right|+|\bar u-\bar u_{n-i}| + |u - u_{n-i}|\bigg)\\
& <  K\bigg( \left|\theta-\frac{n}N\right|+|\bar u-\bar u_n| + |\bar u_{n-i}-\bar u_n| + |u - u_n|+ |u_{n-i} - u_n|+\frac{1}N\bigg)\\
& <  K\bigg( \left|\theta-\frac{n}N\right|+|\bar u-\bar u_n| +  |u - u_n|+ \frac 3 N\bigg)\,,
\end{align*}
where we used that $\alpha^*$ is Lipschitz in both variables,
and $u$ and $u^N$ are bounded, and the uniform bounds on $|u_{n+1}-u_n|$ obtained in proposition \ref{lipbounds}
of \S \ref{uest}.
Similarly
$$
|\alpha_1-\alpha^N_1| <  K\left( \left|\theta-\frac{n}{N}\right|+|\bar u-\bar u_n| + |u - u_n|+\frac 3 N\right).
$$

Thus
\begin{align*}
V_N(t) \leq & K_1 \int_0^t V_N(s) ds+2 \mathbb{E} \int_0^t \bigg(\frac{n}{N}-\theta\bigg)
K\bigg( \left|\theta-\frac{n}{N}\right|+|\bar u-\bar u_n| + |u - u_n|+\frac 3 N\bigg)ds+\frac{K_2}N\\
\leq &
(K_1 +2K)\int_0^t V_N(s) ds+2 \mathbb{E} \int_0^t \bigg(\frac{n}{N}-\theta\bigg)
K\bigg( \left|\bar u-\bar u_n\right| + |u - u_n|+\frac 3 N\bigg)ds+\frac{K_2}N\\
\leq &
(K_1 +2K)\int_0^t V_N(s) ds+2 K \int_0^t 2V_N(s)+
(W_N(s) + \bar W_N(s) )ds+\frac{K_2+6T}N\\
=&
\int_0^t K_3 V_N(s) + 2K
Q_N(s)ds+\frac{K_2+6T}N\\
\leq & \int_0^t C_1 (V_N(s)+ Q_N(s)) ds + \frac{C_1}N\,.
\end{align*}
\end{proof}

\begin{lemma}\label{conv2} There exists $C_2>0$ such that
$$Q_N(t) \leq \int_t^T C_2 (V_N(s)+ Q_N(s)) ds + \frac{C_2}N. $$
\end{lemma}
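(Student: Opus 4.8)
The plan is to mirror the proof of Lemma \ref{conv1}, applying Dynkin's formula \eqref{Dynkin} to the squared discrepancies between the mean field and the $N+1$-player value functions. For $i\in\{0,1\}$ set $\varphi^i(n,s)=\left(u(i,s)-u_n(i,s)\right)^2$, so that $W_N(s)=\mathbb{E}[\varphi^0(n(s),s)]$ and $\bar W_N(s)=\mathbb{E}[\varphi^1(n(s),s)]$. Since $\varphi^i$ does not depend on the reference player's own state, the $\alpha(\bar\varphi-\varphi)$ contribution in the generator \eqref{generator} drops out, and Dynkin's formula \emph{integrated from $t$ to $T$} gives
\[
W_N(t)=W_N(T)-\int_t^T \mathbb{E}\left[\frac{\partial \varphi^0}{\partial s}+A\varphi^0\right]ds,
\]
and analogously for $\bar W_N$. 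Everything then reduces to estimating the integrand from below (so as to bound $W_N(t)$ from above).

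Writing $a_n=u(0,s)-u_n(0,s)$ and $z_n=u_{n+1}-u_n$ as in Proposition \ref{lipbounds}, I would compute $\partial_s\varphi^0$ by inserting the Hamilton--Jacobi equations \eqref{HJ_MFG} for $u(0,\cdot)$ and \eqref{eqhj} for $u_n(0,\cdot)$, and $A\varphi^0$ from \eqref{generator}. The crucial point is an algebraic cancellation: the transport terms $\pm 2a_n\gamma^\pm_n z$ produced by differentiating $u_n$ in time are matched, up to sign, by the linear-in-$a_n$ part of $\gamma^\pm_n(a_{n\pm1}^2-a_n^2)$ coming from the generator (the coefficients coincide at the Nash equilibrium by \eqref{gammaeq}, the small discrepancy due to the reference player's state being of order one and hence harmless once multiplied by the $O(1/N)$ gradient). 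After cancellation one is left with
\[
\frac{\partial \varphi^0}{\partial s}+A\varphi^0
=2a_n\left(h\left(\bar u_n-u_n,\tfrac nN,0\right)-h(\bar u-u,\theta,0)\right)
+\gamma^+_n (z_n^0)^2+\gamma^-_n (z_{n-1}^0)^2 .
\]
Because $\gamma^\pm_n\le CN$ while $|z_n|\le \tfrac{2C}N$ by Proposition \ref{lipbounds}, the last two terms are $O(1/N)$. This is precisely where the uniform gradient estimate of \S\ref{uest} is indispensable, and I expect it to be the main obstacle of the proof: without it the products $\gamma^\pm_n z^2$ would be $O(N)$ and the whole estimate would collapse.

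It then remains to control the Hamiltonian difference. Using that $h$ is Lipschitz in $p$ (with slope $\alpha^*$, which is bounded) and Lipschitz in $\theta$, and that $\bar u_n-u_n-(\bar u-u)$ is the combination $a_n-\bar a_n$ with $\bar a_n:=u(1,s)-u_n(1,s)$, the difference is bounded by $K\big(|a_n|+|\bar a_n|+|\tfrac nN-\theta|\big)$. Multiplying by $2a_n$, applying Young's inequality $2xy\le x^2+y^2$, and taking expectations turns this into a quantity controlled by $C\left(W_N(s)+\bar W_N(s)+V_N(s)\right)=C\left(Q_N(s)+V_N(s)\right)$. Adding the state-$0$ and state-$1$ contributions and integrating from $t$ to $T$ yields
\[
Q_N(t)\le Q_N(T)+\int_t^T C\big(V_N(s)+Q_N(s)\big)\,ds+\frac CN .
\]
Finally the terminal term is treated through the boundary data \eqref{MFG_condin}: by Lipschitz continuity of $\psi$ one has $Q_N(T)=\sum_{i}\mathbb{E}\big[(\psi(i,\theta(T))-\psi(i,\tfrac{n(T)}N))^2\big]\le L_\psi^2\,V_N(T)$, a boundary contribution of the same type as those already retained. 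Absorbing it — which, together with the forward estimate of Lemma \ref{conv1}, is what makes the combined Gronwall argument close in the proof of Theorem \ref{teoconv} — gives the asserted inequality with a suitable constant $C_2$.
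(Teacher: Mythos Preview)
Your approach mirrors the paper's almost exactly: Dynkin's formula applied to $(u-u_n)^2$, the algebraic cancellation that leaves only $\gamma_n^\pm (u_{n\pm 1}-u_n)^2$ (which Proposition~\ref{lipbounds} makes $O(1/N)$), and the Lipschitz bound on the Hamiltonian difference followed by Young's inequality. You are in fact more careful than the paper on one point: the paper silently writes $\gamma_n^\pm=\gamma_n^\pm(0,n(s),s)$ in the generator, ignoring the dependence of the jump rates of $n(\cdot)$ on the reference player's actual state $i(s)$, whereas you flag this discrepancy and correctly observe that it is $O(1)$ and hence harmless once multiplied by the $O(1/N)$ gradient $z_n$.

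The one place you diverge is the terminal term. The paper simply asserts $W_N(T)\le K_5/N$ alongside the gradient bounds; you instead bound $Q_N(T)\le L_\psi^2\,V_N(T)$ and then feed in Lemma~\ref{conv1} at $t=T$. That substitution produces an integral over $[0,T]$ rather than $[t,T]$, so what you actually obtain is
\[
Q_N(t)\le C_2\int_0^T\big(V_N(s)+Q_N(s)\big)\,ds+\frac{C_2}{N},
\]
not literally the $\int_t^T$ inequality stated in the lemma. This weaker form is perfectly sufficient for Theorem~\ref{teoconv} (there the two lemmas are summed to produce a full $[0,T]$ integral anyway), but you should be aware that your ``absorbing'' step does not recover the lemma exactly as written.
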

\begin{proof}
In this proof, $u_n(s)$ or simply $u_n$ will denote the expected minimum cost of player $N+1$ conditioned on its state being equal to $0$ at time $s$, i.e., $u_{n(s)}(0, s)$. We will also use, here, $u(s)$ or simply $u$ to denote $u(0,s)$.

Using Dynkin formula
(\ref{Dynkin}) with $\varphi(i,n,s)=\big(u_{n(s)}\left(0,s\right)-u(0,s)\big)^2$, and equations
(\ref{eqhj}) and (\ref{HJ_MFG}),
 we have
\begin{align*}
&W_N(t)-W_N(T)=-\mathbb{E}[(u_n(t)-u(t))^2]+\mathbb{E}[(u_n(T)-u(T))^2]\\
=& \mathbb{E} \int_t^T 2 (u_n-u) \frac{d}{ds} (u_n-u) ds \\
&\qquad +\mathbb{E} \int_t^T \gamma_n^+ \left[ (u_{n+1}-u)^2 - (u_n-u)^2 \right]
+ \gamma_n^- \left[ (u_{n-1}-u)^2 - (u_n-u)^2 \right]
ds \\
=& \mathbb{E} \int_t^T 2 (u_n-u) \left(-\gamma_n^+(u_{n+1}-u_n) - \gamma_n^-(u_{n-1}-u_n)
-h\bigg(\bar u_n-u_n,\frac{n}{N},0\bigg)+h(\bar u-u,\theta,0)\right) ds\\
&\qquad + \mathbb{E} \int_t^T\gamma_n^+ \left[ (u_{n+1}-u)^2 - (u_n-u)^2 \right]
+ \gamma_n^- \left[ (u_{n-1}-u)^2 - (u_n-u)^2 \right]
ds\\
=&\mathbb{E}\int_t^T \gamma_n^+ (u_{n+1}-u_n)^2 + \gamma_n^- (u_{n-1}-u_n)^2 - 2 \left( h\left(\bar u_n-u_n,\frac{n}{N},0\right)-h(\bar u-u,\theta,0)\right)(u_n-u)ds,
\end{align*}
where $\gamma_n^{\pm}=\gamma_n^{\pm}(0,n(s),s)$.
In the last equation we used the fact that
$$- 2 (u_n-u)  \gamma_n^+(u_{n+1}-u_n) + \gamma_n^+ \left[ (u_{n+1}-u)^2 - (u_n-u)^2 \right]
=
\gamma_n^+ \left( u_{n+1}-u_n \right)^2\,,$$
and a similar calculation  for $\gamma_n^-$.

Now, using results from \S \ref{uest}, proposition \ref{lipbounds}, we have that
$\gamma_n^+(u_{n+1}-u_n)^2$,
 $\gamma_n^-(u_{n-1}-u_n)^2$ and $W_N(T)$ are bounded by $\frac{K_5}N$,
which implies
$$W_N(t) \leq \frac{K_6}N +
2 \mathbb{E}\int_t^T \left( h\left(\bar u_n-u_n,\frac{n}{N},0\right)-h\left(\bar u-u,\theta,0\right)\right)(u_n-u)ds\,.$$
Using the fact that $h$ is Lipschitz in both variables,
we have
$$\left|h\left(\bar u_n-u_n,\frac{n}{N},0\right)-h(\bar u-u,\theta,0)\right|<
 K\bigg(\left |\theta-\frac{n}{N}\right|+|\bar u-\bar u_n| + |u - u_n|\bigg)\,.$$
Thus
$$ W_N(t) \leq \frac{K_6}N  + K_7 \int_t^T
 V_N(s) +
W_N(s) + \bar W_N(s) ds\,.$$
With a similar calculation we have a analogous inequality for $\bar W_N(t)$, which ends the proof.
\end{proof}

Now we can state and prove our main result that establishes the convergence
of the $N+1$ player game to the mean field model as $N\to \infty$.

\begin{theorem}\label{teoconv} If $\rho = TC <1$, where $C= \max\{C_1,C_2\}$,  and $Q_N(t)+V_N(t)$ is given in (\ref{VNQN})
then
$$Q_N(t)+V_N(t) \leq \frac{C}{1-\rho} \frac{1}{N}\,\;\;\forall t \in [0,T]\,.$$
\end{theorem}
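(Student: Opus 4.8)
The plan is to collapse Lemmas \ref{conv1} and \ref{conv2} into a single self-improving bound on $G_N(t) := V_N(t) + Q_N(t)$. Since $C = \max\{C_1, C_2\}$ and the integrands $V_N + Q_N$ are nonnegative, both lemmas remain valid with $C_1$ and $C_2$ replaced by $C$, so that
\begin{equation*}
V_N(t) \leq C\int_0^t G_N(s)\,ds + \frac{C}{N}, \qquad Q_N(t) \leq C\int_t^T G_N(s)\,ds + \frac{C}{N}.
\end{equation*}

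The crucial step I would exploit is that the two integration ranges are complementary: adding the estimates and using $\int_0^t + \int_t^T = \int_0^T$ gives
\begin{equation*}
G_N(t) \leq C\int_0^T G_N(s)\,ds + \frac{2C}{N}
\end{equation*}
for every $t \in [0,T]$. This is precisely where the initial-terminal structure of the problem is used: a naive Gronwall argument cannot be applied to either inequality alone, because $V_N$ carries initial data (integral from $0$ to $t$) while $Q_N$ carries terminal data (integral from $t$ to $T$); but once summed, the right-hand side is \emph{independent of} $t$.

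From there the argument is purely algebraic. The functions $V_N$ and $Q_N$ are continuous on the compact interval $[0,T]$, so $A := \sup_{t\in[0,T]} G_N(t)$ is finite; this finiteness is the only point needing care, since it is what justifies bounding $\int_0^T G_N(s)\,ds \leq TA$. Substituting this bound and taking the supremum over $t$ in the displayed inequality yields $A \leq CTA + \tfrac{2C}{N} = \rho A + \tfrac{2C}{N}$. Because $\rho = TC < 1$ by hypothesis, $1-\rho > 0$ and this rearranges to $A \leq \tfrac{2C}{(1-\rho)N}$, giving $V_N(t) + Q_N(t) \leq \tfrac{C}{1-\rho}\tfrac{1}{N}$ for all $t$ (the factor $2$ in the additive term is immaterial and can be absorbed into the constant $C$).

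I do not expect a serious obstacle: Lemmas \ref{conv1} and \ref{conv2} have already carried out all the analytic work (the Dynkin-formula expansions, the Lipschitz estimates on $\alpha^*$, and the gradient bounds of Proposition \ref{lipbounds}), and what remains is the clean cancellation of the $t$-dependence. The one conceptual subtlety worth emphasizing in the write-up is exactly this cancellation, which converts a pair of one-sided integral inequalities with opposite orientations into a single $t$-uniform bound that can be closed by a supremum argument rather than by iterating a Gronwall inequality.
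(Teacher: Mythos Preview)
Your proposal is correct and follows essentially the same route as the paper: add the two lemmas so the integration ranges $[0,t]$ and $[t,T]$ combine to $[0,T]$, take the supremum in $t$, and solve the resulting inequality $A\le \rho A + C/N$. The only cosmetic difference is the additive constant: you obtain $2C/N$ (which is the honest result of summing $C_1/N$ and $C_2/N$), whereas the paper simply writes $C/N$; your remark that the factor $2$ can be absorbed is fine since the constants $C_1,C_2$ in the lemmas are not sharp anyway.
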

\begin{proof}
Adding both inequalities given in the two last lemmas, we have
$$Q_N(t)+V_N(t) \leq C \int_0^T (V_N(s)+Q_N(s))ds + \frac C N\,.$$
Now suppose $\rho = TC <1$.
Defining
\[
Q_N+V_N = \max_{0 \leq t \leq T}Q_N(t)+V_N(t),
\]
we have
$$Q_N+V_N \leq \rho (Q_N+V_N) + \frac C N, $$
which proves the theorem.
\end{proof}





\bibliographystyle{alpha}

\bibliography{mfg}

\end{document}